\def\titlerunning#1{\gdef\titrun{#1}}
\def\author#1{\gdef\autrun{\def\and{\unskip, }#1}\gdef\@author{#1}}
\def\address#1{{\def\and{\\\hspace*{15.6pt}}\renewcommand{\thefootnote}{}\footnote{#1}}\markboth{\autrun}{\titrun}}
\def\email#1{email: \href{mailto:#1}{#1} }
\def\subjclass#1{\par\bigskip\noindent\textbf{Mathematics Subject Classification 2020.} #1}
\def\keywords#1{\par\smallskip\noindent\textbf{Keywords.} #1}
\newenvironment{dedication}{\itshape\center}{\par\medskip}
\newtheorem{thm}{Theorem}[section]
\newtheorem{cor}[thm]{Corollary}
\newtheorem{prop}[thm]{Proposition}
\newtheorem{lem}[thm]{Lemma}
\theoremstyle{definition}
\numberwithin{equation}{section}
\newlength{\bibitemsep}\setlength{\bibitemsep}{0pt}
\newlength{\bibparskip}\setlength{\bibparskip}{0pt}
\let\oldthebibliography\thebibliography
\renewcommand\thebibliography[1]{\oldthebibliography{#1}\setlength{\parskip}{\bibitemsep}\setlength{\itemsep}{\bibparskip}}
\newcommand{\R}{{\mathbb R}}
\newcommand{\N}{{\mathbb N}}
\newcommand{\Z}{{\mathbb Z}}
\newcommand{\C}{{\mathbb C}}
\newcommand{\be}[1]{\begin{equation}\label{#1}}
\newcommand{\ee}{\end{equation}}
\renewcommand{\(}{\left(}
\renewcommand{\)}{\right)}
\renewcommand{\pmatrix}[1]{\(\begin{matrix}#1\end{matrix}\)}
\newcommand{\A}{\mathbf A_a}
\newcommand{\nA}{\nabla_{\kern-2pt a}}
\newcommand{\dz}{\partial_z}
\newcommand{\dzbar}{\partial_{\bar z}}
\newcommand{\crit}{\mathsf c}
\newcommand{\acrit}{\mathsf a}
\newcommand{\D}{D_{\kern-1pt a}}
\begin{document}
\titlerunning{Critical magnetic field for 2d magnetic Dirac-Coulomb operators}
\title{\textbf{Critical magnetic field for\\ 2d magnetic Dirac-Coulomb operators\\ and Hardy inequalities}}
\author{Jean Dolbeault \and Maria J. Esteban \and Michael Loss}
\date{}

\maketitle
\thispagestyle{empty}

\address{J. Dolbeault, M. J. Esteban: CEREMADE (CNRS UMR n$^\circ$ 7534), PSL university, Universit\'e Paris-Dauphine,
Place de Lattre de Tassigny, 75775 Paris 16, France; \email{dolbeaul@ceremade.dauphine.fr, esteban@ceremade.dauphine.fr}}
\address{M. Loss: School of Mathematics, Skiles Building, Georgia Institute of Technology,
Atlanta GA 30332-0160, USA; \email{loss@math.gatech.edu}}

\begin{dedication}
It is with great pleasure that we dedicate this paper to Ari the occasion of his 70th birthday, because of his strong interest in Hardy inequalities and his pioneering work in this area.
\end{dedication}


\begin{abstract}

This paper is devoted to the study of the two-dimensional Dirac-Coulomb operator in presence of an Aharonov-Bohm external magnetic potential. We characterize the highest intensity of the magnetic field for which a two-dimensional magnetic Hardy inequality holds. Up to this critical magnetic field, the operator admits a distinguished self-adjoint extension and there is a notion of ground state energy, defined as the lowest eigenvalue in the gap of the continuous spectrum.

\keywords{Aharonov-Bohm magnetic potential, magnetic Dirac operator, Cou\-lomb potential, critical magnetic field, self-adjoint operators, eigenvalues, ground state energy, Hardy inequality, Wirtinger derivatives, Pauli operator.}

\subjclass{Primary 81Q10; Secondary 46N50, 81Q05, 47A75}


\end{abstract}

\section{Introduction and main results}\label{Sec:Introduction}

\emph{Aharonov-Bohm magnetic fields} describe an idealized situation where a solenoid interacts with a charged quantum mechanical particle and it is customary to take this to be a particle without spin. It was realized by Laptev and Weidl~\cite{Laptev-Weidl} that in such a situation a Hardy inequality holds, provided that the flux of the solenoid is not an integer multiple of $2 \pi$. This result was extended in~\cite{bonheureetal} to the non-linear case yielding a sharp Caffarelli-Kohn-Nirenberg type inequality. The current paper is devoted to \emph{Hardy inequalities} but for spinor valued functions in two dimensions. It has been known for some time that Hardy type inequalities yield information about the bound state problem for the three dimensional Dirac-Coulomb equation~\cite{DolEstSer-00}. It is therefore natural to ask whether this relationship continues to hold for the two dimensional Dirac-Coulomb problem and whether it continues to hold in presence of a solenoid. Let us emphasize that we take the Coulomb potential to be $1/r$ and not $-\ln r$. It turns out that a number of spectral properties such as the eigenvalues and eigenfunctions can be worked out in an elementary fashion. Another point of interest is that the approach to self-adjoint extensions through Hardy inequalities as pioneered in~\cite{Esteban-Loss-1, Esteban-Loss-2} works also for this case. In addition to working out the \emph{ground state energy} which falls into the gap, we also get a \emph{critical magnetic field} for which the ground state energy hits the value $0$ and beyond which the operator cannot be further defined, on the basis of pure energy considerations, as a self-adjoint operator. As the field strength approaches this critical value, the slope of the eigenvalue as a function of the field strength tends to negative infinity. We also exhibit the corresponding eigenfunction for all magnetic field strengths below the critical value.

An additional bonus is that all the quantities are given explicitly which allows to investigate in a simple fashion the non-relativistic limit. The presence of the magnetic field in the Dirac equation manifests itself entirely through the vector potential. Formally the Pauli equation should be related with as a non-relativistic limit and involve the magnetic field. The problem, as mentioned above, is that the magnetic field is a delta function at the origin, \emph{i.e.}, it appears as a point interaction. Such situations were investigated before in~\cite{Erdoes-Vougalter} where self-adjoint extensions for the Pauli Hamiltonian involving magnetic point interactions are constructed. 

\medskip Let $\boldsymbol\sigma=(\sigma_i)_{i=1,2,3}$ be the Pauli-matrices defined by
\[
\sigma_1:=\pmatrix{0&1\\1&0}\,,\quad\sigma_2:=\pmatrix{0&-i\\i&0}\,,\quad\sigma_3:=\pmatrix{1&0\\0&-1}\,.
\]
Throughout this paper, let us consider on $\R^2$ the Aharonov-Bohm magnetic potential
\[
\A(x):=\frac a{\rho^2}\,\pmatrix{-x_2\\x_1}\quad\mbox{where}\quad\rho=|x|=\sqrt{x_1^2+x_2^2}\,.
\]
Here $a$ is a real parameter and $(x_1,x_2)$ are standard Cartesian coordinates of $x\in\R^2$. The magnetic gradient is defined as
\[
\nA:\\=\nabla-i\,\A\,.
\]
In atomic units ($m=\hbar=c=1$) the $2d$ magnetic Dirac operator can be written as
\[
H_a:=\sigma_3-i\,\boldsymbol{\sigma}\cdot\nA=\pmatrix{1&\D\\
\D^*&-1}
\]
where $z=x_1+i\,x_2$ and $\bar z=x_1-i\,x_2$. Here
\[
\D:=-\,2\,i\,\dz+i\,a\,\frac{\bar z}{|z|^2}\quad\mbox{and}\quad\D^*:=-\,2\,i\,\dzbar-i\,a\,\frac z{|z|^2}\,,
\]
where
\[
\dz:=\frac12\,(\partial_1-i\,\partial_2)\quad\mbox{and}\quad\dzbar:=\frac12\,(\partial_1+i\,\partial_2)
\]
are the \emph{Wirtinger derivatives}. In polar coordinates such that $\rho=|x|=|z|$ and $\theta=\arctan\(x_2/x_1\)$ so that $z=\rho\,e^{i\theta}$, we have
\[
2\,\dz=e^{-i\theta}\(\partial_\rho-\tfrac i\rho\,\partial_\theta\),\quad2\,\dzbar=e^{i\theta}\(\partial_\rho+\tfrac i\rho\,\partial_\theta\)
\]
and
\[
\D=-\,i\,e^{-i\theta}\(\partial_\rho-\tfrac i\rho\,\partial_\theta-\tfrac a\rho\)\quad\mbox{and}\quad\D^*=-\,i\,e^{i\theta}\(\partial_\rho+\tfrac i\rho\,\partial_\theta+\tfrac a\rho\),
\]
For self-adjointness properties of $H_a$, we refer to~\cite{Sitenko_2000, Falomir_2001, Cacciafesta_Fanelli_2017}.

\medskip For any $\nu\in(0,1/2]$, let us consider the \emph{magnetic Dirac-Coulomb operator}
\[
H_{\nu,a}:=H_a-\frac\nu{|x|}\,.
\]
Our purpose is to establish the range of $a$ for which $H_{\nu,a}$ is self-adjoint for a well chosen domain. According to the approach in~\cite{Esteban-Loss-1}, the self-adjointness is based on the inequality
\be{Hardy}
\int_{\R^2}\(\frac{|\D^*\varphi|^2}{1+\lambda+\frac\nu{|x|}}+\Big(1-\lambda-\tfrac\nu{|x|}\Big)\,|\varphi|^2\)\,dx\ge0\quad\forall\,\varphi\in C^\infty_{\rm c}\big(\R^2\setminus\{0\},\C\big)
\ee
for some $\lambda\in(-1,1)$. For every $\nu\in(0,1/2]$, let us define the \emph{critical magnetic field}~by
\[
\acrit(\nu):=\sup\Big\{\mathsf a>0\,:\,\exists\,\lambda\in(-1,1)\,\mbox{such that~\eqref{Hardy} holds true for any}\,a\in[0,\mathsf a)\Big\}\,.
\]
Our first result deals only with~\eqref{Hardy}. Let us define the function
\[
\crit(s):=\frac12-s\,.
\]
\begin{thm}[Hardy inequality for the magnetic Dirac-Coulomb operator]\label{Thm:Hardy} For any $\nu\in(0,1/2)$, we have
\[
\acrit(\nu)=\crit(\nu)
\]
and~\eqref{Hardy} holds true for any $a\in(0,\acrit(\nu)]$ and $\lambda\in(-1,\lambda_{\nu,a}]$ where
\be{lambda-nu-a}
\lambda_{\nu,a}:=\frac{\sqrt{\crit(a)^2-\nu^2}}{\crit(a)}\,.
\ee
\end{thm}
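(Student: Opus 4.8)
The plan is to reduce \eqref{Hardy} to a one-dimensional problem by expanding $\varphi$ in Fourier modes $e^{ik\theta}$, $k\in\Z$, and then to analyze the resulting radial inequality mode by mode. Writing $\varphi(\rho,\theta)=\sum_k u_k(\rho)\,e^{ik\theta}$, the operator $\D^*=-i\,e^{i\theta}(\partial_\rho+\frac i\rho\,\partial_\theta+\frac a\rho)$ acts on each mode as multiplication by $e^{i\theta}$ composed with the radial operator $-i\,(\partial_\rho-\frac{k}\rho+\frac a\rho)=-i\,(\partial_\rho-\frac{k-a}\rho)$, so orthogonality of the angular exponentials decouples the quadratic form in \eqref{Hardy} into a sum over $k$ of radial integrals. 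Thus it suffices to prove, for each $k\in\Z$, the one-dimensional inequality
\[
\int_0^\infty\(\frac{\big|u'-\tfrac{k-a}\rho\,u\big|^2}{1+\lambda+\tfrac\nu\rho}+\Big(1-\lambda-\tfrac\nu\rho\Big)\,|u|^2\)\,\rho\,d\rho\ge0
\]
for all $u\in C^\infty_{\rm c}((0,\infty))$, and then to optimize over $k$ to find the worst mode.

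Next I would identify the critical mode. The combination $k-a$ plays the role of an effective angular momentum; since $a\in(0,1/2)$ is small, the dangerous modes are $k=0$ (giving coefficient $-a$) and $k=1$ (giving coefficient $1-a$), and one expects $k=0$ to be the binding one because $|{-a}|<|1-a|$ and a smaller centrifugal barrier makes the Hardy inequality harder. For the critical mode, the strategy is the classical substitution/completion-of-square trick used for Dirac-Coulomb Hardy inequalities: one looks for a function $f(\rho)$ (of the form $\rho^{\gamma}$ near the origin, with $\gamma$ matching the expected $|x|^{\crit(a)}$-type behavior) such that writing $u=f\,w$ and integrating by parts turns the integrand into a manifest square plus a remainder that vanishes for the optimal choice of $\lambda$. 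Concretely, the Euler-Lagrange/Riccati equation associated with the quadratic form should admit a positive solution precisely when $a\le\crit(\nu)=\tfrac12-\nu$ and $\lambda\le\lambda_{\nu,a}$, with $\lambda_{\nu,a}$ as in \eqref{lambda-nu-a}; the exponent governing the behavior at the origin is $\sqrt{\crit(a)^2-\nu^2}$, which is real exactly under the stated condition, and this is where the formula for $\lambda_{\nu,a}$ comes from.

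To get $\acrit(\nu)\le\crit(\nu)$ (the reverse bound), I would exhibit a minimizing-type sequence or a near-optimal test function concentrating at the origin: taking $u(\rho)=\rho^{\beta}\chi(\rho)$ with a cutoff $\chi$ and $\beta$ approaching the critical exponent, the leading small-$\rho$ behavior of the integral is controlled by the coefficient $\frac{\nu^2}{1+\lambda}$ from the $|u|^2/(1+\lambda+\nu/\rho)$-type term balanced against the centrifugal energy $(\beta-(k-a))^2$ coming from $|u'-\frac{k-a}\rho u|^2$; when $a>\crit(\nu)$ no choice of $\lambda\in(-1,1)$ and $\beta$ makes the quadratic form nonnegative, because the relevant discriminant $\crit(a)^2-\nu^2$ becomes negative, forcing the form to take negative values. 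This matches the threshold and also explains the claim that the optimal $\lambda$ can be taken up to $\lambda_{\nu,a}$.

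The main obstacle I expect is the sharp analysis of the critical mode near $\rho=0$: one must carefully handle the singular weight $1/(1+\lambda+\nu/\rho)$, which degenerates like $\rho/\nu$ as $\rho\to0$, so the ``kinetic'' term is weighted down near the origin and the standard Hardy argument must be adapted. The cleanest route is probably to substitute $u=\rho^{a}\,v$ (or $u=\rho^{k-a}v$) to remove the centrifugal-like cross term, expand the square, integrate by parts to generate the pointwise term $\pm\frac1\rho$, and then reduce to showing that a certain explicit quadratic expression in $\lambda$ (with coefficients depending on $a$ and $\nu$) is nonnegative — at which point the condition $\lambda\le\lambda_{\nu,a}$ and the threshold $a\le\tfrac12-\nu$ should drop out algebraically. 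Verifying that lower-order modes $k\ne0$ impose no stronger constraint is then a comparatively routine monotonicity check in $|k-a|$.
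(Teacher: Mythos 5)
Your plan is essentially the paper's proof: Fourier decomposition reducing to the radial $k=0$ mode, a completion-of-square/ground-state substitution producing the optimizer $\rho^{\sqrt{\crit(a)^2-\nu^2}-1/2}e^{-\nu\rho/\crit(a)}$ and the value $\lambda_{\nu,a}$, and a test function with the borderline exponent $\rho^{-1/2}$ truncated near the origin (whose energy diverges logarithmically to $-\infty$ when $\crit(a)<\nu$) to get $\acrit(\nu)\le\crit(\nu)$ --- the paper merely routes the same computation through the Aharonov--Casher substitution $\varphi=|x|^{-a}\eta$ and the scaling $\rho\mapsto(1+\lambda)\rho$ before completing the square (Lemma~\ref{Lem:Square}). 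One caveat on your heuristics: since the kinetic weight degenerates like $\rho/\nu$ at the origin, the quantity governing mode $k$ is $|\tfrac12+k-a|$ rather than $|k-a|$ (so the runner-up mode is $k=-1$, not $k=1$, and the small-$\rho$ balance is against $\nu^2$, not $\nu^2/(1+\lambda)$, consistent with $\acrit$ being $\lambda$-independent); this does not affect the conclusion that $k=0$ is the binding mode, which the paper verifies by checking that the cross terms proportional to $\ell(\ell-2a)$ and $\ell(\ell+1-2a)$ are nonnegative for $\ell\neq0$.
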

Notice that $a\le\crit(\nu)$ is equivalent to $\nu\le\crit(a)$. Under this condition, the quadratic form
\be{quadratic}
\varphi\mapsto\mathcal Q_{\nu,a,\lambda}(\varphi):=\int_{\R^2}\(\frac{|\D^*\varphi|^2}{1+\lambda+\frac\nu{|x|}}+\(1-\lambda-\tfrac\nu{|x|}\)\,|\varphi|^2\)\,dx
\ee
is nonnegative on $C^\infty_{\rm c}\big(\R^2\setminus\{0\},\C\big)$ for any $\lambda\in(-1,\lambda_{\nu,a}]$. Since $\mathcal Q_{\nu,a,\lambda}$ is associated with a symmetric operator, it is closable. The results of~\cite{Esteban-Loss-1} can be adapted as follows.
\begin{thm}[Self-adjointness of the magnetic Dirac-Coulomb operator]\label{Thm:critical-a-HB} Let $\nu\in(0,1/2]$. For any $a\in(0,\acrit(\nu)]$, the quadratic form $\mathcal Q_{\nu,a,\lambda}$ is closable and its form domain is a Hilbert space $\mathcal F_{\nu,a}$ which does not depend on $\lambda\in (-1, 1)$. Let
\[
\mathcal D_{\nu,a}:=\Big\{\psi=(\varphi,\chi)^\top\in L^2(\R^2,\C^2)\,:\,\varphi\in\mathcal F_{\nu,a}\,,\;H_{\nu,a}\,\psi\in L^2(\R^2,\C^2)\Big\}
\]
where $ H_{\nu,a}\,\psi$ is understood in the sense of distributions. Then the operator $H_{\nu,a}$ with domain~$\mathcal D_{\nu,a}$ is self-adjoint. Additionally, if $a<\acrit(\nu)$, then $\mathcal F_{\nu,a}$ does not depend on $\nu$ and
\be{subcriticalF+}
\mathcal F_{\nu,a}=\left\{\varphi\in L^2(\R^2,\C)\cap H^1_{\rm loc}\big(\R^2\setminus\{0\},\C\big)\,:\,\sqrt{\tfrac{|x|}{1+|x|}}\,\D^*\varphi\in L^2(\R^2,\C)\right\}\,.
\ee
\end{thm}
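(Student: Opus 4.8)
The plan is to follow the strategy of \cite{Esteban-Loss-1}, using Theorem~\ref{Thm:Hardy} as the crucial coercivity input, and to proceed in four stages: closability and construction of $\mathcal F_{\nu,a}$; $\lambda$-independence of the form domain; self-adjointness of $H_{\nu,a}$ on $\mathcal D_{\nu,a}$; and the explicit identification \eqref{subcriticalF+} in the subcritical case. First, I would fix $\nu\in(0,1/2]$ and $a\in(0,\acrit(\nu)]$. By Theorem~\ref{Thm:Hardy} (extended to the endpoint $\nu=1/2$, where $\acrit=0$ and there is nothing to prove, and noting $a\le\crit(\nu)\iff\nu\le\crit(a)$), the quadratic form $\mathcal Q_{\nu,a,\lambda}$ is nonnegative on $C^\infty_{\rm c}(\R^2\setminus\{0\},\C)$ for $\lambda\in(-1,\lambda_{\nu,a}]$. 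Since it is the form of a symmetric operator (essentially $|\D^*\varphi|^2$ weighted plus a bounded-below multiplication, all on $C^\infty_{\rm c}$ away from the origin), it is closable; let $\mathcal F_{\nu,a}$ be the completion of $C^\infty_{\rm c}(\R^2\setminus\{0\},\C)$ with respect to $\|\varphi\|^2_{\nu,a}:=\mathcal Q_{\nu,a,\lambda_0}(\varphi)+\|\varphi\|_{L^2}^2$ for some fixed reference $\lambda_0$, say $\lambda_0=0$ when $a<\acrit(\nu)$ so that $\lambda_{\nu,a}>0$, and regard it concretely as a subspace of $L^2$.

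For the $\lambda$-independence, the point is that for two values $\lambda,\lambda'\in(-1,1)$ the difference of integrands is controlled: the potential terms differ by $(\lambda'-\lambda)\,|\varphi|^2$ which is $L^2$-bounded, and the kinetic weights $\bigl(1+\lambda+\tfrac\nu{|x|}\bigr)^{-1}$ and $\bigl(1+\lambda'+\tfrac\nu{|x|}\bigr)^{-1}$ are comparable uniformly in $x$ (their ratio lies between two positive constants depending only on $\lambda,\lambda',\nu$). Hence for $\lambda,\lambda'\in(-1,1)$ with $\mathcal Q_{\nu,a,\lambda}\ge0$ on $C^\infty_{\rm c}$ one gets $\mathcal Q_{\nu,a,\lambda'}(\varphi)\le C\,\bigl(\mathcal Q_{\nu,a,\lambda}(\varphi)+\|\varphi\|_{L^2}^2\bigr)$ and symmetrically; this shows all the graph norms $\|\cdot\|_{\nu,a}$ for $\lambda\in(-1,\lambda_{\nu,a}]$ are equivalent, hence define the same Hilbert space $\mathcal F_{\nu,a}$ — and by the comparability of weights this persists for all $\lambda\in(-1,1)$, even though nonnegativity of $\mathcal Q_{\nu,a,\lambda}$ itself is only claimed up to $\lambda_{\nu,a}$.

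For self-adjointness I would fix $\lambda\in(-1,\lambda_{\nu,a}]$ and build an auxiliary nonnegative self-adjoint operator $B_\lambda$ on $L^2(\R^2,\C)$ associated with the closed form $\mathcal Q_{\nu,a,\lambda}$ on $\mathcal F_{\nu,a}$; formally $B_\lambda=\D\,\bigl(1+\lambda+\tfrac\nu{|x|}\bigr)^{-1}\D^*+\bigl(1-\lambda-\tfrac\nu{|x|}\bigr)$. The heart of the matter — following \cite{Esteban-Loss-1} — is then to show that $\psi=(\varphi,\chi)^\top\in\mathcal D_{\nu,a}$ solves $H_{\nu,a}\psi=E\psi$ (or more precisely to characterize the resolvent) iff the upper component $\varphi$ lies in $\mathcal F_{\nu,a}$ and satisfies the second-order equation coming from eliminating $\chi=\bigl(1+\lambda+\tfrac\nu{|x|}\bigr)^{-1}\D^*\varphi$ via the lower equation $\D^*\varphi-(1+\tfrac\nu{|x|})\chi=(E-1-\cdots)$, so that $H_{\nu,a}$ is unitarily equivalent (through the map $\varphi\mapsto(\varphi,\chi)$) to a function of $B_\lambda$ plus bounded terms. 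Concretely one checks: (i) the operator $H_{\nu,a}$ with domain $\mathcal D_{\nu,a}$ is symmetric — this uses integration by parts on $\R^2\setminus\{0\}$ together with the fact that membership of $\varphi$ in $\mathcal F_{\nu,a}$ kills the boundary term at the origin, which is exactly where the Hardy inequality enters; (ii) the ranges of $H_{\nu,a}\pm i$ (or of $H_{\nu,a}-\lambda$) are all of $L^2(\R^2,\C^2)$, by solving the elliptic equation for $\varphi$ using the self-adjointness and coercivity of $B_\lambda$ and then defining $\chi$ by the algebraic relation. Together these give self-adjointness.

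Finally, for the subcritical identification \eqref{subcriticalF+} when $a<\acrit(\nu)$, so that $\nu<\crit(a)$ strictly: the inclusion $\mathcal F_{\nu,a}\subseteq\{\varphi\in L^2\cap H^1_{\rm loc}(\R^2\setminus\{0\}):\sqrt{\tfrac{|x|}{1+|x|}}\,\D^*\varphi\in L^2\}$ is immediate from the definition of the form norm (the weight $\tfrac{|x|}{1+|x|}$ is comparable to $\bigl(1+\lambda+\tfrac\nu{|x|}\bigr)^{-1}$, and local ellipticity of $\D^*$ away from the origin gives $H^1_{\rm loc}$). The reverse inclusion — density of $C^\infty_{\rm c}(\R^2\setminus\{0\},\C)$ in that weighted space — is the main obstacle. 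I would prove it by a two-sided cutoff-and-mollify argument: cut off near infinity with a standard Lipschitz cutoff (the error is controlled since $\varphi\in L^2$ and $\sqrt{\tfrac{|x|}{1+|x|}}\,\D^*\varphi\in L^2$); and cut off near the origin with a logarithmic cutoff $\eta_\varepsilon$ supported away from $0$, where the commutator term $\D^*(\eta_\varepsilon)\,\varphi$ must be shown to vanish in the weighted norm — this is where strict subcriticality $\nu<\crit(a)$ is used, via a strict (not borderline) Hardy bound giving $\int |x|^{-2}|\varphi|^2\,dx<\infty$ near $0$ with room to spare, so that $\int_{|x|\le\varepsilon}|\nabla\eta_\varepsilon|^2|\varphi|^2\,dx\to0$. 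Since the resulting space no longer refers to $\nu$ (only the weight $\tfrac{|x|}{1+|x|}$ appears), $\nu$-independence of $\mathcal F_{\nu,a}$ follows. The expected hard part is exactly this endpoint cutoff estimate near the origin: at $a=\acrit(\nu)$ the Hardy inequality is saturated and the logarithmic cutoff does \emph{not} disappear, which is precisely why \eqref{subcriticalF+} and the $\nu$-independence are asserted only in the strictly subcritical regime.
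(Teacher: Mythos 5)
Your first three stages (closability, $\lambda$-independence of the form domain via comparability of the weights, self-adjointness via symmetry plus surjectivity of $H_{\nu,a}-\lambda$) follow the same route as the paper, which itself only sketches these steps by reference to Esteban--Loss and Sch\"ole\ss{}--type arguments; that part of your plan is fine. The genuine gap is in your treatment of~\eqref{subcriticalF+}, where you have the difficulty located on the wrong side. You assert that the inclusion $\mathcal F_{\nu,a}\subseteq\{\varphi\in L^2\cap H^1_{\rm loc}:\sqrt{|x|/(1+|x|)}\,\D^*\varphi\in L^2\}$ is ``immediate from the definition of the form norm''. It is not. The form norm is $\mathcal Q_{\nu,a,\lambda}(\varphi)+\kappa\,\|\varphi\|_{L^2}^2$, and its potential part contains $-\int\frac\nu{|x|}\,|\varphi|^2\,dx$, which is unbounded below near the origin; a sequence in $C^\infty_{\rm c}(\R^2\setminus\{0\})$ that is Cauchy for the form norm need not have $\sqrt{|x|/(1+|x|)}\,\D^*\varphi_n$ Cauchy in $L^2$, because a divergent kinetic term can be cancelled by a divergent Coulomb term. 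That this is a real phenomenon and not a pedantic worry is shown by the critical case $a=\acrit(\nu)$: there the optimizer $\varphi_\star\sim|x|^{-1/2}$ belongs to the form domain while the two terms of $\mathcal Q$ are individually infinite, so the inclusion you call immediate actually \emph{fails} at the endpoint. What is needed in the subcritical range is the quantitative coercivity estimate
\[
\Big(1-\tfrac1{t^2}\Big)\,\frac1{\max\{1+\lambda,\nu\}}\int_{\R^2}\tfrac{|x|}{1+|x|}\,|\D^*\varphi|^2\,dx\;\le\;\mathcal Q_{\nu,a,\lambda}(\varphi)\,,
\]
which the paper obtains by a scaling trick: set $t=\frac{\crit(a)}\nu\sqrt{1-\lambda^2}>1$ (so that $\lambda=\lambda_{t\nu,a}$ and $t\nu\le\crit(a)$ is still admissible), apply the Hardy inequality $\mathcal Q_{t\nu,a,\lambda}\ge0$ to $\phi(x)=\varphi(t^{-1}x)$, and deduce that the Coulomb term is absorbed by the fraction $1/t^2$ of the kinetic term plus an $L^2$ term. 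Strict subcriticality enters exactly here, through $t>1$. Without this estimate the identification~\eqref{subcriticalF+}, and hence the claimed $\nu$-independence of $\mathcal F_{\nu,a}$, does not follow.

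By contrast, the step you single out as ``the main obstacle'' --- density of $C^\infty_{\rm c}(\R^2\setminus\{0\},\C)$ in the explicit space for the norm $\|\cdot\|$, via logarithmic cutoffs at the origin --- is a legitimate but secondary ingredient (the paper leaves it implicit), and your outline of it is reasonable, except that the integrability you should invoke near the origin is $\int_{|x|\le1}|x|^{-1}|\varphi|^2\,dx<\infty$ (coming from the subcritical Hardy inequality with the weight $\nu/|x|$), not $\int|x|^{-2}|\varphi|^2\,dx<\infty$, which is generally false for elements of $\mathcal F_{\nu,a}$.
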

In other words, the domain~$\mathcal D_{\nu,a}$ of $H_{\nu,a}$ is the space of spinors
\begin{multline*}
\psi=\pmatrix{\varphi\\\chi}=(\varphi,\chi)^\top\quad\mbox{with}\quad\varphi\in\mathcal F_{\nu,a}\quad\mbox{and}\quad\chi\in L^2(\R^2,\C)\quad\mbox{such that}\\
\D^*\varphi-\(1+\tfrac\nu{|x|}\)\chi\in L^2(\R^2,\C)\quad\mbox{and}\quad\D\chi+\(1-\tfrac\nu{|x|}\)\varphi\in L^2(\R^2,\C)\,.
\end{multline*}
Here $\D^*\varphi$, $\varphi/|x|$, $\D\chi$, and $\chi/|x|$ are interpreted in the sense of distributions. The operator $H_{\nu,a}$ acts on the two components of the spinor $\psi$ and we shall say that $\varphi$ is the \emph{upper component} and $\chi$ the \emph{lower component}. The claim of Theorem~\ref{Thm:critical-a-HB} is that $H_{\nu,a}$ with domain $\mathcal D_{\nu,a}$ is self-adjoint if the field $a$ is at most equal to the critical magnetic field $\acrit(\nu)$. This critical field manifests itself yet in another way. We recall (see for instance~\cite[Theorem~4.7]{Thaller-1992}) that the essential spectrum of $H_{\nu,a}$ is $(-\infty,-1]\cup[1,+\infty)$. Even if the operator $H_{\nu,a}$ is not bounded from below, there is a notion of \emph{ground state}, which also makes sense in the non-relativistic limit (see Section~\ref{Sec:NRL}).
\begin{thm}[Ground state energy of the magnetic Dirac operator]\label{Thm:ground-state} Let $\nu\in(0,1/2]$ and $a\in(0,\acrit(\nu)]$. Then $\lambda_{\nu,a}$ is the lowest eigenvalue in $(-1,1)$ of the operator $H_{\nu,a}$ with domain $\mathcal D_{\nu,a}$.\end{thm}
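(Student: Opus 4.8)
The plan is to exhibit $\lambda_{\nu,a}$ as an eigenvalue by direct construction, and then to show no smaller eigenvalue in $(-1,1)$ can exist by using the Hardy inequality of Theorem~\ref{Thm:Hardy} together with the variational characterization implicit in the Esteban--Loss construction. First I would separate variables: writing $\varphi$ and $\chi$ in polar coordinates and decomposing in angular modes $e^{ik\theta}$, $k\in\Z$, the operators $\D$ and $\D^*$ act within each mode because of their explicit form $\D=-\,i\,e^{-i\theta}\(\partial_\rho-\tfrac i\rho\,\partial_\theta-\tfrac a\rho\)$ and $\D^*=-\,i\,e^{i\theta}\(\partial_\rho+\tfrac i\rho\,\partial_\theta+\tfrac a\rho\)$. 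In the mode $k=0$ for the upper component, the eigenvalue equation $H_{\nu,a}\psi=\lambda\,\psi$ becomes a first-order ODE system in $\rho$ for the radial profiles $(f,g)$, namely $-i\,(g'+\tfrac{a}\rho\,g)=(\lambda-1+\tfrac\nu\rho)\,f$ and $-i\,(f'-\tfrac{a}\rho\,f)=(\lambda+1+\tfrac\nu\rho)\,g$ (up to the precise index bookkeeping). Seeking a solution with the behaviour $\rho^{\,\gamma}e^{-\kappa\rho}$ near $0$ and at $\infty$, the indicial equation at the origin forces $\gamma^2=\crit(a)^2-\nu^2$, so $\gamma=\sqrt{\crit(a)^2-\nu^2}$; matching with decay at infinity ($\kappa=\sqrt{1-\lambda^2}$) and requiring the confluent-hypergeometric series to terminate at its lowest term pins down $\lambda=\gamma/\crit(a)=\lambda_{\nu,a}$. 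One then checks this explicit eigenfunction lies in $\mathcal D_{\nu,a}$: square-integrability near $0$ holds since $\gamma>-1$, and the weighted condition $\sqrt{|x|/(1+|x|)}\,\D^*\varphi\in L^2$ follows from the same local exponent, so membership in $\mathcal F_{\nu,a}$ via \eqref{subcriticalF+} is immediate for $a<\acrit(\nu)$, with the boundary case $a=\acrit(\nu)$ (where $\gamma=0$) handled by the $\nu=1/2$-type description of $\mathcal F_{\nu,a}$ in Theorem~\ref{Thm:critical-a-HB}.

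For the lower bound — that no eigenvalue below $\lambda_{\nu,a}$ exists in the gap — I would argue as follows. Suppose $H_{\nu,a}\psi=\lambda\,\psi$ with $\psi=(\varphi,\chi)^\top\in\mathcal D_{\nu,a}$, $\lambda\in(-1,1)$. Eliminating the lower component through $\chi=(1+\lambda+\tfrac\nu{|x|})^{-1}\D^*\varphi$ (valid since $1+\lambda+\nu/|x|>0$) and pairing the resulting second-order equation with $\varphi$ gives exactly $\mathcal Q_{\nu,a,\lambda}(\varphi)=0$. By Theorem~\ref{Thm:Hardy}, $\mathcal Q_{\nu,a,\lambda}$ is a nonnegative closable quadratic form for every $\lambda\le\lambda_{\nu,a}$ and, by the monotonicity of $\lambda\mapsto\mathcal Q_{\nu,a,\lambda}$ (the coefficient $1-\lambda-\nu/|x|$ decreases and $(1+\lambda+\nu/|x|)^{-1}$ decreases in $\lambda$), a strictly larger $\lambda$ would make the form take negative values on test functions. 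Hence a nonzero $\varphi\in\mathcal F_{\nu,a}$ with $\mathcal Q_{\nu,a,\lambda}(\varphi)=0$ forces $\lambda\ge\lambda_{\nu,a}$; more precisely, for $\lambda<\lambda_{\nu,a}$ the form has a positive lower bound in the relevant sense that prevents a zero eigenvector, so the infimum of the gap-spectrum is attained precisely at $\lambda_{\nu,a}$. Combining this with the explicit eigenfunction gives that $\lambda_{\nu,a}$ \emph{is} the lowest eigenvalue in $(-1,1)$.

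I expect the main obstacle to be the lower-bound half, specifically the passage from ``$\mathcal Q_{\nu,a,\lambda}\ge0$ on $C^\infty_{\rm c}(\R^2\setminus\{0\})$'' to a statement strong enough to exclude eigenvalues: one must rule out that a zero mode of the closed form sits exactly at the threshold $\lambda<\lambda_{\nu,a}$ while the open-dense form is merely nonnegative, which requires either a strict Hardy inequality with remainder term below the critical field or a careful use of the equality case in Theorem~\ref{Thm:Hardy} (the value $\lambda=\lambda_{\nu,a}$ being precisely where nonnegativity is saturated, realized by the constructed eigenfunction). The angular decomposition should reduce everything to one-dimensional Hardy inequalities on $(0,\infty)$ where the saturating function and its exponent $\gamma=\sqrt{\crit(a)^2-\nu^2}$ are explicit, making the remainder-term argument tractable; the boundary regime $a=\acrit(\nu)$, where $\gamma=0$ and the eigenfunction only marginally fails to be more regular, will need the finer description of $\mathcal D_{\nu,a}$ and may require a separate limiting argument.
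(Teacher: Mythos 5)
Your outline matches the paper's proof in its essentials: the lower bound comes from the Hardy inequality (an eigenfunction's upper component satisfies $\mathcal Q_{\nu,a,\lambda}(\varphi)=0$ after eliminating $\chi$, and since $\mu\mapsto\mathcal Q_{\nu,a,\mu}(\varphi)$ is strictly decreasing for $\varphi\neq0$ while $\mathcal Q_{\nu,a,\mu}\ge0$ on the closed form domain for all $\mu\le\lambda_{\nu,a}$, one gets $\lambda\ge\lambda_{\nu,a}$), and the upper bound comes from exhibiting the explicit eigenfunction, which you derive by solving the radial ODE rather than by reusing the optimizer of Proposition~\ref{Prop:critical-a-HB2}; both routes land on the same function \eqref{varphistar}. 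The obstacle you single out as the main one — passing from nonnegativity on $C^\infty_{\rm c}(\R^2\setminus\{0\},\C)$ to the exclusion of eigenvalues below $\lambda_{\nu,a}$ — is in fact the easy part: the inequality extends to $\mathcal F_{\nu,a}$ because that space is by construction the form closure, and the strict monotonicity in $\lambda$ already rules out a zero of the form at any $\lambda<\lambda_{\nu,a}$ without any remainder term.

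The genuine gap is at the critical field $a=\acrit(\nu)$, which the statement includes. You assert that membership of the candidate eigenfunction in $\mathcal F_{\nu,a}$ is ``handled by'' the description in Theorem~\ref{Thm:critical-a-HB}, but the explicit characterization \eqref{subcriticalF+} is only valid for $a<\acrit(\nu)$; at $a=\acrit(\nu)$ no explicit description of $\mathcal F_{\nu,a}$ is available, and the space is only defined abstractly as a form closure. One must therefore actually produce test functions approximating $\varphi_\star$ with uniformly bounded form norm. This is delicate precisely because at the critical field the exponent $\sqrt{\crit(a)^2-\nu^2}$ vanishes, so $\varphi_\star\sim\rho^{-1/2}$ near the origin and the two singular terms of $\mathcal Q_{\nu,a,0}(\varphi_\star)$ (the weighted $|\D^*\varphi_\star|^2$ term and the $\nu|x|^{-1}|\varphi_\star|^2$ term) are each logarithmically divergent; only their combination is finite. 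The paper resolves this with an explicit cut-off $\theta_\epsilon$ vanishing near the origin and at infinity, a verification that $\limsup_{\epsilon\to0_+}\mathcal Q_{\nu,a,0}(\varphi_\star\,\theta_\epsilon)<+\infty$, and a final mollification. You flag that a ``separate limiting argument'' may be needed but do not supply it; without it the critical case of the theorem is not proved.
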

In the subcritical and critical range, the ground state energy $\lambda_{\nu,a}$ of the magnetic Dirac-Coulomb operator is given by~\eqref{lambda-nu-a} and the ground state itself can be computed: see Proposition~\ref{Prop:qualitative}. The Hardy inequality~\eqref{Hardy} is our key tool in the analysis of the magnetic Dirac-Coulomb operator. This deserves an explanation. If $\psi\in\mathcal D_{\nu,a}$ is an eigenfunction of $H_{\nu,a}$ associated with an eigenvalue $\lambda\in(-1,1)$, then $H_{\nu,a}\,\psi=\lambda\,\psi$ can be rewritten as a system for the upper and lower components $\varphi$ and $\chi$, namely
\be{EL}
\D\chi+\(1-\lambda-\frac\nu{|x|}\)\varphi=0\,,\quad\D^*\varphi-\(1+\lambda+\frac\nu{|x|}\)\chi=0\,.
\ee
By eliminating $\chi$ from the second equation, we find that $\varphi$ is a critical point of the quadratic form $\mathcal Q_{\nu,a,\lambda}$ defined by~\eqref{quadratic} which moreover realizes the equality case in~\eqref{Hardy}. We aim at characterizing $\lambda_{\nu,a}$ as the minimum of a variational problem, which further justifies why we call it a \emph{ground state} energy.

\medskip Our strategy is to prove~\eqref{Hardy} directly using the Aharonov-Casher transformation
\be{psi-eta}
\psi(x)=\pmatrix{|x|^{-a}\,\eta_1(x)\\|x|^a\,\eta_2(x)}\quad\forall\,x\in\R^2
\ee
with $\eta_i:\R^2\to\C$, for $i=1$, $2$. System~\eqref{EL} amounts to
\be{psi-eta-bis}\begin{array}{c}
\eta_1-2\,i\,\rho^{2a}\,\dz\eta_2-\tfrac\nu\rho\,\eta_1=\lambda\,\eta_1\,,\\[8pt]
-2\,i\,\rho^{-2a}\,\dzbar\eta_1-\tfrac\nu\rho\,\eta_2-\eta_2=\lambda\,\eta_2\,.
\end{array}\ee
As for~\eqref{EL}, using the second equation, we can eliminate the lower component and obtain
\be{eta2}
\eta_2=-\,2\,i\,\frac{\rho^{-2a}\,\dzbar\eta_1}{1+\lambda+\tfrac\nu\rho}\,.
\ee
On the space
\[
\mathcal G_{\nu,a}:=\left\{\eta\in L^2\(\R^2,\C; |x|^{-2a}\,dx\)\,:\,|x|^{-a}\eta(x)\in\mathcal F_{\nu,a}\right\}\,,
\]
let us define the counterpart of $\mathcal Q_{\nu,a,\lambda}$ as in~\eqref{quadratic}, that is,
\[\label{J}
J(\eta,\nu,a,\mu):=\int_{\R^2}\(\frac{4\,|\dzbar\eta|^2}{1+\mu+\frac\nu{|x|}}+\Big(1-\mu-\tfrac\nu{|x|}\Big)\,|\eta|^2\)\frac{dx}{|x|^{2a}}\,.
\]
The map $\eta\mapsto J(\eta,\nu,a,\lambda)$ is differentiable and we read from~\eqref{psi-eta-bis} that $\eta_1$ is a critical point after eliminating $\eta_2$ using~\eqref{eta2}.

For a given function $\eta\in\mathcal G_{\nu,a}$, if $J(\eta,\nu,a,0)$ is finite, then $\mu\mapsto J(\eta,\nu,a,\mu)$ is well defined and monotone nonincreasing on $(-1,+\infty)$, with $\lim_{\mu\to+\infty}J(\eta,\nu,a,\mu)=-\,\infty$. As a consequence, the equation $J(\eta,\nu,a,\mu)=0$ has one and only one solution in $(-1,+\infty)$ if, for instance, $\lim_{\mu\to(-1)_+}J(\eta,\nu,a,\mu)>0$. Let us denote this solution by $\lambda_\star(\eta,\nu,a)$, so that
\[\label{lambdastar}
J\big(\eta,\nu,a,\lambda_\star(\eta,\nu,a)\big)=0\,.
\]
By convention, we take $\lambda_\star(\eta,\nu,a)=+\infty$ if the equation has no solution in $(-1,+\infty)$. The main technical estimate and the key result for proving Theorems~\ref{Thm:Hardy},~\ref{Thm:critical-a-HB} and~\ref{Thm:ground-state} goes as follows. 
\begin{prop}[Variational characterization]\label{Prop:critical-a-HB2} For any $\nu\in(0,1/2]$ and $a\in[0,\crit(\nu)]$, 
\be{minim}
\min_{\eta\in\mathcal G_{\nu,a}}\lambda_\star(\eta,\nu,a)=\lambda_{\nu,a}
\ee
where $\lambda_{\nu,a}$ is defined by~\eqref{lambda-nu-a}, and the equality case is achieved, up to multiplication by a constant, by
\[
\eta_\star(x)=|x|^{\sqrt{\crit(a)^2-\nu^2}-\crit(a)}\,e^{-\,\frac\nu{\crit(a)}\,|x|}\quad\forall\,x\in\R^2\setminus\{0\}\,.
\]
\end{prop}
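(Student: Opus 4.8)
The strategy is to reduce the two-dimensional variational problem to a family of one-dimensional problems by expanding $\eta$ in Fourier modes $\eta(x)=\sum_{k\in\Z}f_k(\rho)\,e^{ik\theta}$, exploit the fact that the quadratic form $J(\eta,\nu,a,\mu)$ decouples across modes (since $|\dzbar\eta|^2$, $|\eta|^2$, and the weights $|x|^{-2a}$, $1/|x|$ are all rotationally structured), and then identify the optimal mode and the optimal radial profile. First I would compute, in polar coordinates, that $2\,\dzbar(f_k(\rho)\,e^{ik\theta})=e^{i(k+1)\theta}\big(f_k'+\tfrac k\rho f_k\big)$, so that the cross-terms between different values of $k$ vanish after angular integration and
\[
J(\eta,\nu,a,\mu)=\sum_{k\in\Z}\pi\int_0^\infty\(\frac{\big|f_k'+\tfrac k\rho f_k\big|^2}{1+\mu+\frac\nu\rho}+\Big(1-\mu-\tfrac\nu\rho\Big)|f_k|^2\)\rho^{1-2a}\,d\rho\,.
\]
Since each mode's contribution is, at fixed $\mu$, of the same sign pattern, minimizing $\lambda_\star$ over $\eta$ amounts to finding the mode $k$ and radial function $f$ that make the one-dimensional functional vanish at the smallest $\mu$; monotonicity of $\mu\mapsto J$ (already recorded in the excerpt) guarantees this reformulation is legitimate.

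Next I would analyze the one-dimensional problem for a single mode. Writing $g=\rho^{-a}f_k$ does not simplify things as much as working directly; instead I would guess a power-times-exponential profile $f(\rho)=\rho^{\gamma}e^{-\beta\rho}$ motivated by the explicit $\eta_\star$ in the statement, plug it into the radial integrand, and determine which $(\gamma,\beta)$ make the integrand vanish pointwise (not merely in integral) — this is the mechanism by which the infimum is attained. The pointwise vanishing condition should force a quadratic relation in $\gamma$ whose discriminant produces $\sqrt{\crit(a)^2-\nu^2}$, and matching the $1/\rho$ terms fixes $\beta=\nu/\crit(a)$; the relevant mode turns out to be $k=0$ (the lowest angular momentum channel), consistent with $\eta_\star$ being radial. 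I would then verify that for this profile $J(\eta_\star,\nu,a,\lambda_{\nu,a})=0$, establishing $\min\le\lambda_{\nu,a}$, and that $\eta_\star\in\mathcal G_{\nu,a}$ by checking the integrability conditions defining that space (the exponent $\sqrt{\crit(a)^2-\nu^2}-\crit(a)$ combined with the weight $\rho^{-2a}$ and the behavior near $0$ and $\infty$).

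For the reverse inequality $\lambda_\star(\eta,\nu,a)\ge\lambda_{\nu,a}$ for all admissible $\eta$, I would use the standard Hardy-type completion-of-the-square argument: for $\mu\le\lambda_{\nu,a}$, rewrite the radial integrand for mode $k$ as a perfect square plus a boundary term by inserting the logarithmic derivative $w=f_\star'/f_\star$ of the optimizer, i.e. exploit that $f_\star$ solves the Euler--Lagrange equation $-\big(\rho^{1-2a}(1+\mu+\tfrac\nu\rho)^{-1}(f'+\tfrac k\rho f)\big)'+\cdots=0$ with no node, so that the substitution $f=f_\star\,u$ yields $J\ge \pi\int_0^\infty (\text{positive weight})\,|u'|^2\,\rho^{1-2a}\,d\rho\ge0$. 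The main obstacle I anticipate is twofold: first, getting the algebra of the pointwise-vanishing / Euler--Lagrange condition right so that the exponents genuinely produce $\lambda_{\nu,a}=\sqrt{\crit(a)^2-\nu^2}/\crit(a)$ and confirming $k=0$ is optimal rather than some $k\ne0$; second, the functional-analytic care needed to justify the Fourier decomposition and the density of smooth compactly supported functions in $\mathcal G_{\nu,a}$ so that the pointwise minimizer, which does not itself have compact support, legitimately computes the infimum over the form domain — this is where the precise definition of $\mathcal F_{\nu,a}$ from Theorem~\ref{Thm:critical-a-HB} and an approximation/truncation argument near $0$ and $\infty$ will be required.
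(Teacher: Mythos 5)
Your overall skeleton --- Fourier decomposition of $J$, reduction to a one-dimensional radial problem, a power-times-exponential optimizer, and a ground-state substitution for the lower bound --- is essentially the paper's route. But the step you designate as ``the mechanism by which the infimum is attained'' is wrong: no real $(\gamma,\beta)$ makes the radial integrand of $J$ vanish pointwise. For the $\ell=0$ mode the integrand is, up to the factor $\rho^{-2a}$,
\[
\frac{\rho^2\,|\phi'|^2}{(1+\mu)\,\rho+\nu}+\big((1-\mu)\,\rho-\nu\big)\,|\phi|^2\,,
\]
and for $\phi=\rho^\gamma e^{-\beta\rho}$ pointwise vanishing would force $(\gamma-\beta\rho)^2=\big(\nu-(1-\mu)\,\rho\big)\big((1+\mu)\,\rho+\nu\big)$; comparing coefficients of $\rho^2$ gives $\beta^2=-(1-\mu^2)<0$, which is impossible for $\mu\in(-1,1)$. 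What actually happens (the paper's Lemma~\ref{Lem:Square}) is that, after a scaling in $\rho$, one integration by parts identifies the quadratic form with
\[
\int_0^{+\infty}\frac{|\rho\,\phi'+\mu\,(\nu+\rho)\,\phi|^2}{\nu+\rho}\,\rho^{-2a}\,d\rho
\]
once $\mu$ solves $\nu\,\mu^2-(1-2a)\,\mu+\nu=0$: it is the \emph{first-order} expression $\rho\,\phi'+\mu\,(\nu+\rho)\,\phi$ that vanishes pointwise for the optimizer, while only the integral of $J$, not its integrand, is zero. So you should derive $(\gamma,\beta)$ from this first-order ODE (equivalently from the Euler--Lagrange equation, or from the equality case $u\equiv\mathrm{const}$ of your substitution $f=f_\star u$), not from pointwise vanishing of the integrand; your lower-bound argument, carried out correctly, already yields the upper bound through its equality case, so the flawed step can simply be deleted.

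The second gap is the reduction to the mode $k=0$, which you flag as an anticipated obstacle but do not resolve; it is genuine content, not bookkeeping. The paper handles it by proving, for every $\ell\in\Z\setminus\{0\}$,
\[
\int_0^{+\infty}\frac{|\rho\,\phi'-\ell\,\phi|^2}{(1+\lambda)\,\rho+\nu}\,\frac{d\rho}{\rho^{2a}}\ \ge\ \int_0^{+\infty}\frac{\rho^2\,|\phi'|^2}{(1+\lambda)\,\rho+\nu}\,\frac{d\rho}{\rho^{2a}}\,,
\]
which after an integration by parts reduces to the sign conditions $\ell\,(\ell-2a)\ge0$ and $\ell\,(\ell+1-2a)\ge0$, valid for all nonzero integers precisely because $a\in[0,1/2]$. (Alternatively, one can solve each mode separately, as in the paper's appendix on Laguerre polynomials, and check that $\sqrt{1-\nu^2/\crit_\ell(a)^2}$ with $\crit_\ell(a)=\tfrac12+\ell-a$ is minimized at $\ell=0$; either way an argument is required.) Two smaller points: with the paper's convention $2\,\dzbar=e^{i\theta}(\partial_\rho+\tfrac i\rho\,\partial_\theta)$ the mode-$k$ derivative is $f_k'-\tfrac k\rho\,f_k$, not $f_k'+\tfrac k\rho\,f_k$ (harmless after relabelling, but it changes which $\crit_\ell$ goes with which mode); and the functional-analytic issues you list at the end (membership of $\eta_\star$ in $\mathcal G_{\nu,a}$, density) are correctly identified and are dealt with in the paper via the explicit characterization of $\mathcal F_{\nu,a}$ in the subcritical range and a truncation argument at criticality.
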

Proposition~\ref{Prop:critical-a-HB2} is at the core of the paper. Let us notice that $\lambda_{\nu,a}$ is the largest $\lambda>-1$ such that $J(\eta,\nu,a,\lambda)\ge0$ for all $\eta\in\mathcal G_{\nu,a}$. This is a \emph{Hardy-type inequality} which deserves some additional considerations. A simple consequence of Proposition~\ref{Prop:critical-a-HB2} is indeed the fact that
\be{IneqJ}
J(\eta,\nu,a,\mu)\ge0\quad\forall\,\eta\in C^\infty_{\rm c}\big(\R^2\setminus\{0\},\C\big)
\ee
for any $\mu\in(-1,\lambda_{\nu,a}]$. Inequality~\eqref{IneqJ} provides us with the interesting inequality
\be{reducedDirac-Hardy}
\int_{\R^2}\(\frac{4\,|\dzbar\eta|^2}{1+\lambda_{\nu,a}+\frac\nu{|x|}}+\Big(1-\lambda_{\nu,a}-\tfrac\nu{|x|}\Big)\,|\eta|^2\)\frac{dx}{|x|^{2a}}\ge0\quad\forall\,\eta\in C^\infty_{\rm c}\big(\R^2\setminus\{0\},\C\big)
\ee
in the case $\mu=\lambda_{\nu,a}$. Using~\eqref{psi-eta}, we already prove~\eqref{Hardy} written for $\lambda=\lambda_{\nu,a}$, namely
\be{HardyOpt}
\int_{\R^2}\(\frac{|\D^*\varphi|^2}{1+\lambda_{\nu,a}+\frac\nu{|x|}}+\Big(1-\lambda_{\nu,a}-\tfrac\nu{|x|}\Big)\,|\varphi|^2\)\,dx\ge0\quad\forall\,\varphi\in C^\infty_{\rm c}\big(\R^2\setminus\{0\},\C\big)\,.
\ee

It is an essential property of the Aharonov-Bohm magnetic field that~\eqref{psi-eta} transforms the quadratic form associated with~\eqref{HardyOpt} into~\eqref{reducedDirac-Hardy}, which is a weighted inequality, without magnetic field. In terms of scalings, one has to think of a Hardy type inequality for a Dirac-Coulomb operator in a non-integer dimension $2-2\,a$. By taking the limit of the inequality $J(\eta,\nu,a,\mu)\ge0$ as $\mu\to(-1)_+$, we obtain
\[
\int_{\R^2}\(\frac4\nu\,|x|^{1-2a}\,|\dzbar\eta|^2+2\,\frac{|\eta|^2}{|x|^{2a}}\)dx\ge\nu\int_{\R^2}\frac{|\eta|^2}{|x|^{2a+1}}\,dx\quad\forall\,\eta\in C^\infty_{\rm c}\big(\R^2\setminus\{0\},\C\big)
\]
under the assumption $a\in(0,1/2)$ and $\nu\in(0,\crit(a))$. Using scalings, we can get rid of the non-homogeneous term and find by taking the limit as $\nu\to\crit(a)$ that
\[
\int_{\R^2}|x|^{1-2a}\,|\dzbar\eta|^2\,dx\ge\frac14\,\crit(a)^2\int_{\R^2}\frac{|\eta|^2}{|x|^{2a+1}}\,dx\quad\forall\,\eta\in C^\infty_{\rm c}\big(\R^2\setminus\{0\},\C\big)\,.
\]
These weighted magnetic Hardy inequalities are part of a larger family of inequalities.
\begin{lem}[Hardy inequalities for Wirtinger derivatives]\label{Lem:partialHardyineq} Let $\beta\in\R$. Then we have the two following inequalities (with optimal constants)
\be{Wirtinger1}
\int_{\R^2}|x|^{-2\beta}\,|\dzbar\eta|^2\,dx\ge\frac14\,\min_{\ell\in\Z}\,\(\beta-\ell\)^2\int_{\R^2}|x|^{-2\beta-2}\,|\eta|^2\,dx\quad\forall\,\eta\in C^\infty_{\rm c}\big(\R^2\setminus\{0\},\C\big)\,,
\ee
\be{Wirtinger2}
\int_{\R^2}|x|^{2\beta}\,|\dz\eta|^2\,dx\ge\frac14\,\min_{\ell\in\Z}\,\(\beta-\ell\)^2\int_{\R^2}|x|^{2\beta-2}\,|\eta|^2\,dx\quad\forall\,\eta\in C^\infty_{\rm c}\big(\R^2\setminus\{0\},\C\big)\,.
\ee\end{lem}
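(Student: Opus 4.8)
The plan is to diagonalize both inequalities with respect to the angular variable and reduce them to a one‑parameter family of one‑dimensional weighted Hardy inequalities. Writing $x=\rho\,e^{i\theta}$ and expanding an arbitrary $\eta\in C^\infty_{\rm c}\big(\R^2\setminus\{0\},\C\big)$ as a Fourier series $\eta(\rho,\theta)=\sum_{\ell\in\Z}f_\ell(\rho)\,e^{i\ell\theta}$, with each $f_\ell$ smooth and compactly supported in $(0,\infty)$, the formula $2\,\dzbar=e^{i\theta}\big(\partial_\rho+\tfrac i\rho\,\partial_\theta\big)$ gives $2\,\dzbar\eta=\sum_{\ell\in\Z}\big(f_\ell'-\tfrac\ell\rho\,f_\ell\big)\,e^{i(\ell+1)\theta}$. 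Parseval's identity in $\theta$ then turns the left‑hand side of~\eqref{Wirtinger1} into $\tfrac\pi2\sum_{\ell\in\Z}\int_0^\infty\big|f_\ell'-\tfrac\ell\rho\,f_\ell\big|^2\,\rho^{1-2\beta}\,d\rho$ and its right‑hand side into $\tfrac\pi2\,\min_{\ell\in\Z}(\beta-\ell)^2\sum_{\ell\in\Z}\int_0^\infty|f_\ell|^2\,\rho^{-1-2\beta}\,d\rho$, all integrals being finite since $\eta$ is supported in a compact subset of $\R^2\setminus\{0\}$. It therefore suffices to prove, for each fixed $\ell\in\Z$, the one‑dimensional inequality
\[
\int_0^\infty\Big|f'-\tfrac\ell\rho\,f\Big|^2\,\rho^{1-2\beta}\,d\rho\ \ge\ (\beta-\ell)^2\int_0^\infty|f|^2\,\rho^{-1-2\beta}\,d\rho\qquad\forall\,f\in C^\infty_{\rm c}\big((0,\infty),\C\big)\,.
\]

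For this reduced inequality I would substitute $f(\rho)=\rho^{\ell}\,h(\rho)$, so that $f'-\tfrac\ell\rho f=\rho^{\ell}\,h'$ and the claim becomes the classical one‑dimensional Hardy inequality $\int_0^\infty|h'|^2\,\rho^{\alpha+1}\,d\rho\ge\tfrac{\alpha^2}4\int_0^\infty|h|^2\,\rho^{\alpha-1}\,d\rho$ with $\alpha:=2\,(\ell-\beta)$. This is trivial when $\alpha=0$, and for $\alpha\ne0$ it follows from the identity $\int_0^\infty|h|^2\rho^{\alpha-1}\,d\rho=\tfrac1\alpha\int_0^\infty|h|^2(\rho^\alpha)'\,d\rho=-\tfrac2\alpha\,\mathrm{Re}\int_0^\infty\bar h\,h'\,\rho^\alpha\,d\rho$ (integration by parts produces no boundary term since $h$ is compactly supported in $(0,\infty)$) together with the Cauchy–Schwarz inequality. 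Summing the per‑mode inequality over $\ell\in\Z$ and bounding $(\beta-\ell)^2\ge\min_{\ell\in\Z}(\beta-\ell)^2$ yields~\eqref{Wirtinger1}. Inequality~\eqref{Wirtinger2} then follows by applying~\eqref{Wirtinger1} to $\bar\eta$ with $\beta$ replaced by $-\beta$, since $\overline{\dz\eta}=\dzbar\bar\eta$ and $\min_{\ell\in\Z}(-\beta-\ell)^2=\min_{\ell\in\Z}(\beta-\ell)^2$.

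To prove that the constant $\tfrac14\min_{\ell\in\Z}(\beta-\ell)^2$ is optimal, let $\ell_0\in\Z$ achieve the minimum and test~\eqref{Wirtinger1} with functions concentrated in the single angular mode $\ell_0$. Performing the two changes of variables above (first $f=\rho^{\ell_0}h$, then $\rho=e^{t}$ together with $h(e^{t})=e^{(\beta-\ell_0)t}\,u(t)$) one finds that the quotient of the two sides of the one‑dimensional inequality for $\ell=\ell_0$ equals $(\beta-\ell_0)^2+\big(\int_\R|u'|^2\,dt\big)\big/\big(\int_\R|u|^2\,dt\big)$; choosing $u(t)=\phi(t/L)$ for a fixed nonzero $\phi\in C^\infty_{\rm c}(\R)$ makes the second term tend to $0$ as $L\to+\infty$. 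Translating back, the functions $\eta_L(\rho,\theta):=\rho^\beta\,\phi\big(\tfrac1L\ln\rho\big)\,e^{i\ell_0\theta}$ belong to $C^\infty_{\rm c}\big(\R^2\setminus\{0\},\C\big)$, and the ratio of the two sides of~\eqref{Wirtinger1} evaluated at $\eta_L$ tends to $\tfrac14(\beta-\ell_0)^2=\tfrac14\min_{\ell\in\Z}(\beta-\ell)^2$; the conjugates $\overline{\eta_L}$ play the same role for~\eqref{Wirtinger2}.

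The Fourier reduction and the one‑dimensional Hardy estimate are routine. The only point requiring care is the optimality assertion: one must produce admissible test functions — smooth, compactly supported and bounded away from the origin — whose energy ratio converges to the claimed constant, and this is precisely what the logarithmic change of variables and the slowly spreading profiles $u=\phi(\cdot/L)$ deliver. I do not anticipate a genuine obstacle in any of these steps.
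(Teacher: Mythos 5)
Your proof is correct and follows essentially the same route as the paper's: decompose into angular Fourier modes and reduce to a one-dimensional weighted Hardy inequality for each mode, which you establish via the substitution $f=\rho^\ell h$ plus integration by parts and Cauchy--Schwarz, where the paper instead expands a completed square $\int_0^{+\infty}\rho^{2\beta+1}\,|\partial_\rho\phi-\kappa\,\phi/\rho|^2\,d\rho\ge0$ --- the two computations are equivalent. A welcome addition is your explicit verification of the optimality claim using the logarithmic change of variables and the spreading profiles $u=\phi(\cdot/L)$, a point the paper's proof leaves implicit.
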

As a simple consequence of Lemma~\ref{Lem:partialHardyineq}, we also obtain a family of Hardy inequalities for spinors corresponding to the \emph{magnetic Pauli operator} $-i\,\boldsymbol{\sigma}\cdot\nA$ (see~~\cite{Erdoes-Vougalter}).
\begin{cor}[Hardy inequalities for the magnetic Pauli operator]\label{Cor:ABPauli} Let $\zeta\in\R$, $a\in[0, 1/2]$. Then
\be{Magn-Hardy-Pauli-intro}
\int_{\R^2}|x|^\zeta\,|(\boldsymbol{\sigma}\cdot\nA)\,\psi|^2\,dx\ge C_{a,\zeta}\int_{\R^2}|x|^{\zeta-2}\,|\psi|^2\,dx\quad\forall\,\psi\in C^\infty_{\rm c}(\R^2\setminus\{0\},\C^2)
\ee
and the optimal constant in inequality~\eqref{Magn-Hardy-Pauli-intro} is given by
\[
C_{a,\zeta}=\min_{\pm\,,\,\ell\in\Z}\(a\pm\tfrac\zeta2-\ell\)^2\,.
\]
\end{cor}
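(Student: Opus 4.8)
The plan is to deduce Corollary~\ref{Cor:ABPauli} directly from Lemma~\ref{Lem:partialHardyineq} by the Aharonov--Casher gauge transformation already used to pass from~\eqref{EL} to~\eqref{psi-eta-bis}: it removes the magnetic field and turns $\D$ and $\D^*$ into plain Wirtinger derivatives acting on weighted $L^2$ spaces, to which Lemma~\ref{Lem:partialHardyineq} applies component by component.

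First I would unravel the left-hand side of~\eqref{Magn-Hardy-Pauli-intro}. Since $H_a=\sigma_3-i\,\boldsymbol{\sigma}\cdot\nA=\pmatrix{1&\D\\\D^*&-1}$, the Pauli operator is the off-diagonal part, $-i\,\boldsymbol{\sigma}\cdot\nA=\pmatrix{0&\D\\\D^*&0}$, so that for a spinor $\psi=(\psi_1,\psi_2)^\top$ one has the pointwise identity $|(\boldsymbol{\sigma}\cdot\nA)\,\psi|^2=|\D^*\psi_1|^2+|\D\psi_2|^2$; in particular the quadratic form on the left of~\eqref{Magn-Hardy-Pauli-intro} decouples into an upper-component and a lower-component contribution. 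Then I would set $\psi_1(x)=|x|^{-a}\,\eta_1(x)$ and $\psi_2(x)=|x|^{a}\,\eta_2(x)$; using $\dzbar|x|^{-a}=-\tfrac a2\,|x|^{-a-2}\,z$ and $\dz|x|^{a}=\tfrac a2\,|x|^{a-2}\,\bar z$ one checks that the singular terms $-\,i\,a\,z/|z|^2$ of $\D^*$ and $+\,i\,a\,\bar z/|z|^2$ of $\D$ are cancelled exactly by the differentiated powers of $|x|$, leaving
\[
\D^*\psi_1=-\,2\,i\,|x|^{-a}\,\dzbar\eta_1\,,\qquad\D\psi_2=-\,2\,i\,|x|^{a}\,\dz\eta_2\,.
\]
Since $x\mapsto|x|^{\pm a}$ is smooth and non-vanishing on $\R^2\setminus\{0\}$, the maps $\eta_1\mapsto\psi_1$ and $\eta_2\mapsto\psi_2$ are bijections of $C^\infty_{\rm c}(\R^2\setminus\{0\},\C)$.

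Next I would substitute, so that the left-hand side of~\eqref{Magn-Hardy-Pauli-intro} becomes $4\int_{\R^2}|x|^{\zeta-2a}\,|\dzbar\eta_1|^2\,dx+4\int_{\R^2}|x|^{\zeta+2a}\,|\dz\eta_2|^2\,dx$, and apply~\eqref{Wirtinger1} with $\beta=a-\tfrac\zeta2$ to the first term and~\eqref{Wirtinger2} with $\beta=a+\tfrac\zeta2$ to the second; together with $|x|^{\zeta-2a-2}\,|\eta_1|^2=|x|^{\zeta-2}\,|\psi_1|^2$ and $|x|^{\zeta+2a-2}\,|\eta_2|^2=|x|^{\zeta-2}\,|\psi_2|^2$ this yields
\[
\int_{\R^2}|x|^\zeta\,|(\boldsymbol{\sigma}\cdot\nA)\,\psi|^2\,dx\ge\min_{\ell\in\Z}\(a-\tfrac\zeta2-\ell\)^2\!\int_{\R^2}\!\frac{|\psi_1|^2\,dx}{|x|^{2-\zeta}}+\min_{\ell\in\Z}\(a+\tfrac\zeta2-\ell\)^2\!\int_{\R^2}\!\frac{|\psi_2|^2\,dx}{|x|^{2-\zeta}}\,.
\]
Bounding the two coefficients from below by their common minimum $C_{a,\zeta}=\min_{\pm,\,\ell\in\Z}\(a\pm\tfrac\zeta2-\ell\)^2$ and recombining $|\psi_1|^2+|\psi_2|^2=|\psi|^2$ gives~\eqref{Magn-Hardy-Pauli-intro}.

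Finally, for sharpness I would test~\eqref{Magn-Hardy-Pauli-intro} on one-component spinors: if the minimum defining $C_{a,\zeta}$ is attained with the $-$ sign, take $\psi=(\psi_1,0)^\top$ with $\psi_1=|x|^{-a}\eta_1$ and $\eta_1$ a minimizing sequence for~\eqref{Wirtinger1} at $\beta=a-\tfrac\zeta2$; if it is attained with the $+$ sign, take $\psi=(0,\psi_2)^\top$ with $\psi_2=|x|^a\eta_2$ and $\eta_2$ a minimizing sequence for~\eqref{Wirtinger2} at $\beta=a+\tfrac\zeta2$. Because the constants in Lemma~\ref{Lem:partialHardyineq} are optimal, the ratio of the two sides of~\eqref{Magn-Hardy-Pauli-intro} converges to $C_{a,\zeta}$ along these test functions, so $C_{a,\zeta}$ cannot be improved. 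I expect no genuine obstacle: the only delicate points are the exact cancellation in the gauge identities and checking that one-component test functions saturate the constant; the hypothesis $a\in[0,1/2]$ is never used and is retained only for consistency with the rest of the paper.
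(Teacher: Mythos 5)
Your proposal is correct and follows essentially the same route as the paper: the Aharonov--Casher substitution $\psi=(|x|^{-a}\eta_1,|x|^a\eta_2)^\top$ reduces \eqref{Magn-Hardy-Pauli-intro} to the two weighted Wirtinger inequalities \eqref{Wirtinger1} and \eqref{Wirtinger2} with $\beta=a\mp\zeta/2$, exactly as in the paper's proof. Your explicit verification of the gauge cancellation and the one-component test spinors for sharpness merely spell out details the paper leaves implicit, and your remark that $a\in[0,1/2]$ is not actually needed is consistent with the paper's observation that $a\mapsto C_{a,\zeta}$ is $1$-periodic.
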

A straightforward consequence of the expression of $C_{a,\zeta}$ is that, for all $\zeta\in\R$, the function $a\mapsto C_{a,\zeta}$ is $1$-periodic. More inequalities of interest are listed in Appendices~\ref{Sec:SpecialHardy} and~\ref{Sec:Positron}.

\medskip Let us give an overview of the literature. In absence of magnetic field, the computation of the eigenvalues of the hydrogen atom in the setting of the Dirac equation goes back to~\cite{1928,gordon1928energieniveaus}. As noted in~\cite{Mawhin_2010}, an explicit computation of the spectrum can be done using Laguerre polynomials. By the transformation~\eqref{psi-eta}, which replaces a problem with magnetic field by an equivalent problem with weights, we obtain a similar algebra, which would allow us to adapt the computations of~\cite{Dong-Ma_2003}. As we are interested only in the \emph{ground state energy}, we use a simpler approach based on Hardy-type inequalities for the upper component of the Dirac operator: the inequality follows from a simple \emph{completion of a square} as in~\cite[Remark,~p.~9]{MR2091354}. Notice that~\eqref{psi-eta} can be seen as a special case of the transformation introduced by Aharonov and Casher in~\cite{PhysRevA.19.2461} and used to define the Pauli operator for some measure valued magnetic fields (see~\cite[Inequality~(3)]{Erdoes-Vougalter}). Although rather elementary, the computation in dimension two of the ground state and the ground state energy for the magnetic Dirac-Coulomb operator in presence of the Aharonov-Bohm magnetic field is, as far as we know, original.

As noted in~\cite{Esteban-Lewin-Sere-2019}, the \emph{two-dimensional case} is relevant in solid state physics for the study of graphene: the low-energy electronic excitations are modeled by a massless two-dimensional Dirac equation~\cite{RevModPhys.81.109} but the study of strained graphene involves a massive Dirac operator~\cite{Vozmediano_2010}. Magnetic impurities are known to play a role. The coupling with Aharonov-Bohm magnetic fields raises interesting mathematical questions which have probably interesting consequences from the experimental point of view. 

Our interest for \emph{Hardy-type inequalities} in the presence of an Aharonov-Bohm magnetic field goes back to the inequality proved by Laptev and Weidl in~\cite{Laptev-Weidl}. In the perspective of Schr\"odinger operators, such an inequality in dimension two is somewhat surprising, because it is well known that there is no such inequality without magnetic field. It is therefore natural to investigate whether there is a relativistic counterpart, which is our purpose in this paper, as well as to consider the non-relativistic limit. The link between ground states for Dirac operators and Hardy inequalities is known for instance from~\cite[page~222]{DolEstSer-00} and has been exploited in works like~\cite{MR2091354,DDEV,MR2379440}. Here we have a new example which is particularly interesting as the ground state is explicit and its upper component realizes the equality case in the corresponding Hardy inequality.

The continuous spectrum of $H_{\nu,a}$ depends neither on $\nu$ nor on $a$: see~\cite[Theorem~4.7]{Thaller-1992} and~\cite{Hogreve_2012}. If $a=0$, see~\cite{MR1870409} for all $\nu>0$ and also~\cite{Klaus_1979b,MR1451618} and~\cite{Arai_1982,Arai_1983,Nenciu_1976}. The case $a\neq0$ is less standard but does not raise additional difficulties. It is well known that the lower part of the continuous spectrum prevents $H_{\nu,a}$ to be semi-bounded from below in any reasonable sense. In order to characterize the eigenvalues in the gap, one has to address more subtle \emph{variational principles} than standard Rayleigh quotients. The first min-max formulae based on a decomposition into an upper and a lower component of the free Dirac operator perturbed by an electrostatic potential with Coulomb-type singularity at the origin were proposed by Talman in~\cite{PhysRevLett.57.1091} and Datta-Devaiah in~\cite{Datta_1988}. From the mathematical point of view, \emph{min-max methods} for the characterization of eigenvalues go back to~\cite{MR1479240,MR1724845,DolEstSer-00,MR1767717}. See also~\cite{Dolbeault-Esteban-Loss-06,MorMul-15,Mueller-2016} for more recent results and especially~\cite{SchSolTok-20} which provides a comprehensive overview. Such a variational approach provides numerical schemes for computing Dirac eigenvalues, which have been studied in~\cite{Lenthe_1993,PhysRevLett.85.4020,dolbeault2003variational,Kullie_2004,Zhang_2004,PhysRevA.72.022103}. The symmetry of the electron case and the \emph{positron} case is inherent to the Dirac equation: see for instance~\cite[Chapter~11]{Schwabl_2004} for a review of the classical invariances associated with the Dirac operator. This symmetry has been reformulated from a variational point of view in~\cite{MR2239275}. In the present paper, this is exploited in Appendix~\ref{Sec:Positron} for obtaining a dual family of Hardy-type inequalities, which is formally summarized by the transformation $(\nu,a,\lambda)\mapsto(-\nu,-a,-\lambda)$, $\D\mapsto\D^*$ and $\dzbar\mapsto\dz$. Non-relativistic limits are from the early papers~\cite{1928,gordon1928energieniveaus} a natural question and have been studied more recently, for instance, in the context of the Dirac-Fock model in~\cite{Ounaies-2000,MR1869528,MR2602013}.

A delicate issue for Dirac operators with singular Coulomb potentials defined on smooth functions is the determination of a self-adjoint extension. According to~\cite[Theorem~2.1]{Hogreve_2012}, there are three different regimes of the Dirac-Coulomb operator on $\R^3$. If $0<\nu\le\sqrt3/2$, the operator is essentially self-adjoint, with finite potential and kinetic energies. In the interval $\sqrt3/2<\nu<1$, besides other self-adjoint extensions, distinguished self-adjoint extensions can be singled out (see~\cite{Schmincke_1972b,Schmincke_1972,10.1007/BFb0067087}) for which either the potential energy (see~\cite{W_st_1973,W_st_1975,W_st_1977}) or the kinetic energy (see~\cite{Nenciu_1976,MR462346}) are finite. All these extensions were proved to be equivalent by Klaus and W\"ust in~\cite{Klaus_1979} and coincide with the distinguished extension of~\cite{Esteban-Loss-1}. In the critical case $\nu=1$, Hardy-Dirac inequalities lead to a distinguished self-adjoint extension with finite total energy: see~\cite{Esteban-Loss-1,Esteban-Loss-2,Arrizabalaga_2011}. For $\nu>1$, the operator enjoys a family of self-adjoint extensions, but standard finiteness of the energies fail according to~\cite{Burnap_1981,MR1451618,Voronov_2007}.

On $\R^2$, the self-adjointness properties of the operator $H_a$ (without Coulomb potential) have been studied for instance in~\cite{Sitenko_2000, Falomir_2001, Cacciafesta_Fanelli_2017}. Without magnetic field, self-adjoint extensions preserving gaps have been studied in~\cite{MR0024574,MR1261368} and it is shown there that the decomposition used in the variational approach of~\cite[Theorem~1.1]{SchSolTok-20} enters in the framework of Krein's criterion (see ~\cite[Remark~1.3]{SchSolTok-20}). All self-adjoint extensions of the Dirac-Coulomb operator for $\nu<1$ are classified in~\cite{MR3933559} with corresponding eigenvalues obtained in~\cite{MR3817548}. As noted in~~\cite{SchSolTok-20}, methods similar to those of~\cite{Esteban-Loss-2} are applied in~\cite{MR3962850} to a two-body Dirac operator with Coulomb interaction (without spectral gap). The characterization of the domain of the extension of the operator is of course a very natural question. This is studied in detail in~\cite{Esteban-Lewin-Sere-2019}, including the two-dimensional case but without magnetic field (also see references therein). Some of the results in this paper are natural extensions of the considerations in~\cite{Esteban-Lewin-Sere-2019} for the case without magnetic field.

\medskip This paper is organized as follows. Section~\ref{Sec:homog-Hardy} is devoted to a direct proof of the homogeneous Hardy-like inequalities of Lemma~\ref{Lem:partialHardyineq} and Corollary~\ref{Cor:ABPauli}. In Section~\ref{Sec:GS}, we prove some inhomogeneous Hardy-like inequalities which allow us to study a variational problem leading to the proof of Proposition~\ref{Prop:critical-a-HB2} and Theorem~\ref{Thm:Hardy}. This takes us to the identification of the \emph{critical magnetic field} under which all our results hold true. This also proves the Hardy-type inequality~\eqref{reducedDirac-Hardy} and its consequences (also see Corollary~\ref{Cor:Thm1a} and Appendix~\ref{Sec:SpecialHardy}). Section~\ref{Sec:mainthms} is devoted to the self-adjointness properties of $H_{\nu,a}$ and to the identification of the \emph{ground state}. An explicit expression is found, which is explained by the role played by Laguerre polynomials in the computation of the spectrum of $H_{\nu,a}$ (see Appendix~\ref{Sec:Laguerre}). The non-relativistic limit is discussed in Section~\ref{Sec:NRL}. The inequalities corresponding to the positron case are collected in Appendix~\ref{Appendix}.

\section{Homogeneous Hardy-like inequalities}\label{Sec:homog-Hardy}

In this section, we prove the inequalities of Lemma~\ref{Lem:partialHardyineq} and Corollary~\ref{Cor:ABPauli}, which are independent of our other results, but rely on the completion of squares and on~\eqref{psi-eta}. These proofs are a useful introduction to the main results of this paper.

\begin{proof}[Proof of Lemma~\ref{Lem:partialHardyineq}.] Let us start with the proof of~\eqref{Wirtinger2}. For any $\ell\in\Z$, if $\eta(\rho,\theta)=e^{i\,\ell\,\theta}\, \phi(\rho)$, then
\begin{align*}
4\int_{\R^2}|x|^{2\beta}\,|\dz\eta|^2\,dx&=\int_{\R^2}\rho^{2\beta}\,\Big|\(\partial_\rho-i\,\tfrac{\partial_\theta}\rho\) e^{i\,\ell\,\theta}\phi\,\Big|^2\,dx\\
&=2\,\pi\int_0^{+\infty}\rho^{2\beta+1}\,\Big|\(\partial_\rho+\tfrac\ell\rho\)\phi\,\Big|^2\,d\rho\,,\\
\int_{\R^2}|x|^{2\beta-2}\,|\eta|^2\,dx&=2\,\pi\int_0^{+\infty}\rho^{2\beta-1}\,|\phi|^2\,d\rho\,.
\end{align*}
With an expansion of the square and an integration by parts with respect to $\rho$, we obtain
\[
\int_0^{+\infty}\rho^{2\beta+1}\,\Big|\partial_\rho\phi-\tfrac\kappa\rho\,\phi\,\Big|^2\,d\rho=\int_0^{+\infty}\rho^{2\beta+1}\,|\partial_\rho\phi|^2\,d\rho+\kappa\,(\kappa-2\,\beta)\int_0^{+\infty}\rho^{2\beta-1}\,|\phi|^2\,d\rho
\]
for any $\kappa\in\R$. Applied with $\kappa=\beta$ and with $\kappa=-\,\ell$, this proves that
\[
\int_0^{+\infty}\rho^{2\beta+1}\,|\partial_\rho\phi|^2\,d\rho\ge\beta^2\int_0^{+\infty}\rho^{2\beta-1}\,|\phi|^2\,d\rho
\]
and
\begin{multline*}
\int_0^{+\infty}\rho^{2\beta+1}\,\Big|\partial_\rho\phi+\tfrac\ell\rho\,\phi\,\Big|^2\,d\rho\\
=\int_0^{+\infty}\rho^{2\beta+1}\,|\partial_\rho\phi|^2\,d\rho+\ell\,(\ell-2\,\beta)\int_0^{+\infty}\rho^{2\beta-1}\,|\rho\phi|^2\,d\rho\\
\ge(\beta-\ell)^2\int_0^{+\infty}\rho^{2\beta-1}\,|\phi|^2\,d\rho\,.
\end{multline*}
Altogether, we have
\[
\int_{\R^2}|x|^{2\beta}\,|\dz\eta|^2\,dx\ge\frac14\,\(\beta-\ell\)^2\int_{\R^2}|x|^{2\beta-2}\,|\eta|^2\,dx\,.
\]
Inequality~\eqref{Wirtinger2} for a general $\eta\in C^\infty_{\rm c}\big(\R^2\setminus\{0\},\C\big)$ is then a consequence of a decomposition in Fourier modes. The proof of~\eqref{Wirtinger1} is exactly the same, up to the sign changes $\beta\mapsto-\,\beta$ and $\ell\mapsto-\,\ell$. \end{proof}

\begin{proof}[Proof of Corollary~\ref{Cor:ABPauli}.] Using~\eqref{psi-eta}, Inequality~\eqref{Magn-Hardy-Pauli-intro} is equivalent to
\begin{multline*}
4\int_{\R^2}\(|x|^{\zeta-2a}\,|\dzbar\eta_1|^2+|x|^{\zeta+2a}\,|\dz\eta_2|^2\)dx\\
\ge C_{a,\zeta}\int_{\R^2}\(|x|^{\zeta-2-2a}\,|\eta_1|^2+|x|^{\zeta-2+2a}\,|\eta_2|^2\)dx
\end{multline*}
and the result follows by applying~\eqref{Wirtinger1} and~\eqref{Wirtinger2} to $\eta_1$ and $\eta_2$ with respectively $\eta=a-\zeta/2$ and $\eta=a+\zeta/2$.\end{proof}

\section{The minimization problem}\label{Sec:GS}

The goal of this section is to prove Proposition~\ref{Prop:critical-a-HB2} and Theorem~\ref{Thm:Hardy}. It is centred on the variational problem~\eqref{minim} and its consequences on the definition of the \emph{critical magnetic field}.
\begin{lem}\label{Lem:Square} Assume that $\nu\in(0,1/2]$ and $a\in[0,\crit(\nu)]$. Then
\be{mu}
\mu=\frac{\crit(a)-\sqrt{\crit(a)^2-\nu^2}}\nu
\ee
is the smallest value of $\mu\in\R$ such that the inequality
\[
\int_0^{+\infty}\frac{|\phi'|^2}{\nu+\rho}\,\rho^{2-2a}\,d\rho+\mu^2\int_0^{+\infty}|\phi|^2\,\rho^{1-2a}\,d\rho\ge\nu\int_0^{+\infty}|\phi|^2\,\rho^{-2a}\,d\rho
\]
holds for any function $\phi\in C^1((0,+\infty),\C)$. Moreover, the equality case with $\mu$ given by~\eqref{mu} is achieved by
\[
\phi_\star(\rho)=\rho^{-\mu\,\nu}\,e^{-\mu\,\rho}\quad\forall\,\rho\in(0,+\infty)\,.
\]\end{lem}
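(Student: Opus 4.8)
The plan is to prove the inequality by an exact \emph{completion of squares}, reading the drift off the logarithmic derivative of the conjectured extremal, and to obtain optimality by testing that extremal directly.

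First I would reduce to the essential case. Because $|\phi'|^2$ and $|\phi|^2$ split additively over $\mathrm{Re}\,\phi$ and $\mathrm{Im}\,\phi$, each of the three integrals does too, so the complex inequality is the sum of two real ones and it suffices to take $\phi$ real; the equality case then extends to complex $\phi$ up to a complex factor. Integrating the identity below on $[\varepsilon,M]\subset(0,+\infty)$ further reduces matters to $\phi\in C^1_{\rm c}\big((0,+\infty)\big)$. I would then set $w(\rho):=\rho^{2-2a}/(\nu+\rho)$ and, for a parameter $\mu>0$ to be fixed, choose the drift
\[
v(\rho):=-\,\mu\,\frac{\nu+\rho}\rho=-\,\frac{\mu\,\nu}\rho-\mu\,,
\]
which is exactly $\phi_\star'/\phi_\star$ for $\phi_\star(\rho)=\rho^{-\mu\,\nu}\,e^{-\mu\,\rho}$. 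Expanding the square, one has the pointwise identity
\[
w\,|\phi'|^2=w\,|\phi'-v\,\phi|^2+\big(w\,v\,|\phi|^2\big)'-\big[(w\,v)'+w\,v^2\big]\,|\phi|^2\,.
\]

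The key computation is then to evaluate the correction term. A short calculation gives $w\,v=-\,\mu\,\rho^{1-2a}$, hence $(w\,v)'=-\,\mu\,(1-2a)\,\rho^{-2a}$ and $w\,v^2=\mu^2\,\nu\,\rho^{-2a}+\mu^2\,\rho^{1-2a}$, so, using $1-2a=2\,\crit(a)$,
\[
\mu^2\,\rho^{1-2a}-\nu\,\rho^{-2a}-\big[(w\,v)'+w\,v^2\big]=-\,\big(\nu\,\mu^2-2\,\crit(a)\,\mu+\nu\big)\,\rho^{-2a}\,.
\]
For $\phi\in C^1_{\rm c}$ the total derivative integrates to zero, so adding $\mu^2\int_0^{+\infty}|\phi|^2\,\rho^{1-2a}\,d\rho-\nu\int_0^{+\infty}|\phi|^2\,\rho^{-2a}\,d\rho$ to $\int_0^{+\infty}w\,|\phi'|^2\,d\rho$ and using the two identities, I get that
\[
\int_0^{+\infty}\frac{|\phi'|^2}{\nu+\rho}\,\rho^{2-2a}\,d\rho+\mu^2\int_0^{+\infty}|\phi|^2\,\rho^{1-2a}\,d\rho-\nu\int_0^{+\infty}|\phi|^2\,\rho^{-2a}\,d\rho
\]
equals $\int_0^{+\infty}\frac{\rho^{2-2a}}{\nu+\rho}\,|\phi'-v\,\phi|^2\,d\rho-\big(\nu\,\mu^2-2\,\crit(a)\,\mu+\nu\big)\int_0^{+\infty}|\phi|^2\,\rho^{-2a}\,d\rho$. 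The polynomial $\nu\,t^2-2\,\crit(a)\,t+\nu$ has real roots exactly when $\crit(a)^2\ge\nu^2$, i.e. $\nu\le\crit(a)$, i.e. $a\le\crit(\nu)$ — which is precisely the hypothesis; its roots $\big(\crit(a)\pm\sqrt{\crit(a)^2-\nu^2}\big)/\nu$ have product $1$, and the smaller one is the $\mu$ of~\eqref{mu}. For that $\mu$ the correction term vanishes, leaving the claimed inequality, with equality (in the $C^1$ class) exactly when $\phi'=v\,\phi$, i.e. $\phi=C\,\phi_\star$. The extension from $C^1_{\rm c}$ to $C^1\big((0,+\infty)\big)$ is routine: integrating the pointwise identity on $[\varepsilon,M]$ leaves a nonnegative boundary term $+\mu\,\varepsilon^{1-2a}|\phi(\varepsilon)|^2$ at the lower endpoint, which only helps, and a term $-\mu\,M^{1-2a}|\phi(M)|^2$ at the upper endpoint which vanishes along a sequence $M\to+\infty$ once $\int_0^{+\infty}\rho^{1-2a}|\phi|^2\,d\rho<+\infty$ (if some left-hand integral is infinite there is nothing to prove).

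For optimality, note that $0<\mu\,\nu=\crit(a)-\sqrt{\crit(a)^2-\nu^2}\le\crit(a)=\tfrac12-a$, so for $a<\crit(\nu)$ the function $\phi_\star$ lies in $C^1\big((0,+\infty)\big)$ with all three integrals finite and $\int_0^{+\infty}\rho^{1-2a}|\phi_\star|^2\,d\rho\in(0,+\infty)$, and it saturates the inequality. Hence, replacing $\mu$ by any $\tilde\mu$ while keeping this $\phi_\star$, the left-hand side minus the right-hand side equals $(\tilde\mu^2-\mu^2)\int_0^{+\infty}\rho^{1-2a}|\phi_\star|^2\,d\rho$, which is negative when $|\tilde\mu|<\mu$; this shows~\eqref{mu} is the smallest (nonnegative) admissible value of $\mu$. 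At the borderline $a=\crit(\nu)$ one has $\crit(a)=\nu$, $\mu=1$, and $\phi_\star$ has diverging energy; there both the equality and the optimality are obtained in a limiting sense, by a logarithmic truncation of $\phi_\star$, just as for critical Hardy inequalities. The main point of the proof is the choice of the drift $v$ — equivalently, the guess that the extremal satisfies $\phi'=v\,\phi$ — after which everything reduces to the elementary algebra above; the only technical nuisance is the passage to general $\phi$ and the bookkeeping of the boundary terms, which the explicit powers of $\rho$ and the constraint $a<\tfrac12$ make harmless.
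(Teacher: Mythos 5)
Your proof is correct and is essentially the paper's own argument: your completed square $w\,|\phi'-v\,\phi|^2$ coincides with the paper's $|\rho\,\phi'+\mu\,(\nu+\rho)\,\phi|^2\,\rho^{-2a}/(\nu+\rho)$, and both reduce, after the same integration by parts, to the quadratic condition $\nu\,\mu^2-(1-2\,a)\,\mu+\nu=0$ whose smaller (positive) root is the stated $\mu$. You are in fact somewhat more careful than the paper about the boundary terms for general $\phi\in C^1\big((0,+\infty),\C\big)$, about the minimality of $\mu$ (tested directly on $\phi_\star$), and about the borderline case $a=\crit(\nu)$ where $\phi_\star$ is no longer admissible; these points are left implicit in the paper's two-line proof.
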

\begin{proof} An expansion of the square and an integration by parts show that
\begin{align*}
0\le&\int_0^{+\infty}\frac{|\rho\,\phi'+\mu\,(\nu+\rho)\,\phi|^2}{\nu+\rho}\,\rho^{-2a}\,d\rho\\
&=\int_0^{+\infty}\frac{\rho^2\,|\phi'|^2}{\nu+\rho}\,\rho^{-2a}\,d\rho+\mu\int_0^{+\infty}\(|\phi|^2\)'\,\rho^{1-2a}\,d\rho\\
&\hspace*{4cm}+\mu^2\int_0^{+\infty}(\nu+\rho)\,|\phi|^2\,\rho^{-2a}\,d\rho\\
&=\int_0^{+\infty}\frac{|\phi'|^2}{\nu+\rho}\,\rho^{2-2a}\,d\rho+\mu^2\int_0^{+\infty}|\phi|^2\,\rho^{1-2a}\,d\rho\\
&\hspace*{4cm}+\(\nu\,\mu^2-(1-2\,a)\,\mu\)\int_0^{+\infty}|\phi|^2\,\rho^{-2a}\,d\rho\,.
\end{align*}
The conclusion holds after observing that $\nu\,\mu^2-(1-2\,a)\,\mu=-\nu$ and solving, in the equality case, the equation
\[
\rho\,\phi'+\mu\,(\nu+\rho)\,\phi=0\quad\forall\,\rho\in(0,+\infty)\,.
\]
\end{proof}

Next we apply Lemma~\ref{Lem:Square} to the variational problem~\eqref{minim}.
\begin{proof}[Proof of Proposition~\ref{Prop:critical-a-HB2}.] For any function $\phi$ smooth enough and any $\ell\in\Z\setminus\{0\}$, we have
\[
\int_0^{+\infty}\frac{|\rho\,\phi'-\ell\,\phi|^2}{(1+\lambda)\,\rho+\nu}\,\frac{d\rho}{\rho^{2a}}\ge\int_0^{+\infty}\frac{\rho^2\,|\phi'|^2}{(1+\lambda)\,\rho+\nu}\,\frac{d\rho}{\rho^{2a}}
\]
because an integration by parts shows that
\begin{multline*}
\int_0^{+\infty}\frac{\ell^2\,|\phi|^2-2\,\rho\,\ell\,\phi\,\phi'}{(1+\lambda)\,\rho+\nu}\,\frac{d\rho}{\rho^{2a}}\\
=\int_0^{+\infty}\frac{\ell\,(\ell-2\,a)\,(1+\lambda)\,\rho+\ell\,(\ell+1-2\,a)\,\nu}{\big((1+\lambda)\,\rho+\nu\big)^2}\,|\phi|^2\frac{d\rho}{\rho^{2a}}
\end{multline*}
and $\ell\,(\ell-2\,a)\ge0$ and $\ell\,(\ell+1-2\,a)\ge0$. Using polar coordinates and a decomposition in Fourier modes,
\be{Fourier}
\eta(x)=\sum_{\ell\in\Z}\eta_\ell(\rho)\,e^{i\ell\theta}\,,
\ee
we obtain
\be{JFourierDecomposition}
J(\eta,\nu,a,\lambda)=2\pi\sum_{\ell\in\Z}\int_0^{+\infty}\(\frac{|\rho\,\eta_\ell'-\ell\,\eta_\ell|^2}{(1+\lambda)\,\rho+\nu}+\Big((1-\lambda)\,\rho-\nu\Big)\,|\eta_\ell|^2\)\frac{d\rho}{\rho^{2a}}\,.
\ee
As a consequence, in order to minimize $\lambda_\star(\eta,\nu,a)$, it is enough to consider only the $\ell=0$ mode, \emph{i.e.}, minimize on radial functions.

Let us consider the change of variables $\rho\mapsto(1+\lambda)\,\rho$ and write
\[
\eta(\rho)=\phi\big((1+\lambda)\,\rho\big)\,.
\]
We observe that the largest value of $\lambda>-1$ for which $J(\eta,\nu,a,\lambda)\ge0$ for any $\eta\in\mathcal G_{\nu,a}$ is the largest value of $\lambda>-1$ for which the inequality
\be{Jlambda}
\int_0^{+\infty}\frac{|\phi'|^2}{\nu+\rho}\,\rho^{2-2a}\,d\rho+\frac{1-\lambda}{1+\lambda}\int_0^{+\infty}|\phi|^2\,\rho^{1-2a}\,d\rho-\nu\int_0^{+\infty}|\phi|^2\,\rho^{-2a}\,d\rho\ge0
\ee
holds for any $\phi$. With the notation of Lemma~\ref{Lem:Square}, $(1-\lambda)/(1+\lambda)=\mu^2$ if and only if $\lambda=(1-\mu^2)/(1+\mu^2)$, that is, $\lambda=\sqrt{\crit(a)^2-\nu^2}/\crit(a)=\lambda_{\nu,a}$ according to~\eqref{lambda-nu-a}. Equality in~\eqref{Jlambda} is obtained, up to multiplication by a constant, with \hbox{$\eta_\star(\rho)=\phi_\star\big((1+\lambda)\,\rho\big)$}.\end{proof}

The above proof deserves an observation. In the proof of Lemma~\ref{Lem:Square}, we solve $\nu\,\mu^2-(1-2\,a)\,\mu+\nu=0$. It turns out that for any $\nu<\crit(a)$, this equation has two roots, $\mu_\pm=\(-\,\crit(a)\pm\sqrt{\crit(a)^2-\nu^2}\)/\nu$, and the inequality is true for any $\mu\in[\mu_-,\mu_+]$. In the proof of Proposition~\ref{Prop:critical-a-HB2}, we solve $(1-\lambda)/(1+\lambda)=\mu^2$ and we look for the largest value of $\lambda$ for which $J(\eta,\nu,a,\lambda)\ge0$ for any $\eta$, which is the reason why we pick the value of $\lambda$ corresponding to $\mu=\mu_+$. See Appendix~\ref{Sec:Positron} for similar considerations in the positron case.

\medskip Using the Aharonov-Casher transformation
\be{varphi-eta}
\varphi(x)=|x|^{-a}\,\eta(x)\quad\forall\,x\in\R^2
\ee
which transforms a function $\eta\in\mathcal G_{\nu,a}$ into a function $\varphi\in\mathcal F_{\nu,a}$, we can rephrase the result of Proposition~\ref{Prop:critical-a-HB2} as follows.
\begin{cor}\label{Cor:Thm1a} Under the conditions $\nu\in(0,1/2]$ and $a\in[0,\crit(\nu)]$, the largest value of $\lambda>-1$ for which $\mathcal Q_{\nu,a,\lambda}$ as defined in~\eqref{quadratic} is a nonnegative quadratic form on $\mathcal F_{\nu,a}$ is $\lambda_{\nu,a}$. Additionally, if $a<\crit(\nu)$, the equality case in $\mathcal Q_{\nu,a,\lambda_{\nu,a}}(\varphi)\ge0$ is achieved if and only if $\varphi=\varphi_\star$ up to a multiplicative constant, where
\be{varphistar}
\varphi_\star(\rho,\theta)=\rho^{\kern-2pt\sqrt{\crit(a)^2-\nu^2}-\frac12}\,e^{-\frac\nu{\crit(a)}\,\rho}\quad\forall\,(\rho,\theta)\in\R^+\times[0,2\pi)\,.
\ee
\end{cor}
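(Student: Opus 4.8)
The plan is to deduce Corollary~\ref{Cor:Thm1a} from Proposition~\ref{Prop:critical-a-HB2} essentially by transporting the statement through the Aharonov-Casher map~\eqref{varphi-eta}. The first step is to observe that, by definition of $\mathcal G_{\nu,a}$, the substitution $\varphi(x)=|x|^{-a}\,\eta(x)$ is a bijection between $\mathcal G_{\nu,a}$ and $\mathcal F_{\nu,a}$, and that under this substitution one has the pointwise identity $\D^*\varphi=|x|^{-a}\,\big(-2\,i\,\dzbar\eta\big)$ (this is exactly the computation already used to pass between~\eqref{HardyOpt} and~\eqref{reducedDirac-Hardy}), so that $|\D^*\varphi|^2=4\,|x|^{-2a}\,|\dzbar\eta|^2$. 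Combined with $|\varphi|^2=|x|^{-2a}\,|\eta|^2$ and $dx$ being untouched, this gives $\mathcal Q_{\nu,a,\lambda}(\varphi)=J(\eta,\nu,a,\lambda)$ for every $\lambda\in(-1,1)$ — the identity that makes the two quadratic forms the same object in two coordinate systems.

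Given this identity, nonnegativity of $\mathcal Q_{\nu,a,\lambda}$ on $\mathcal F_{\nu,a}$ is equivalent to nonnegativity of $J(\cdot,\nu,a,\lambda)$ on $\mathcal G_{\nu,a}$. Now I would invoke the remark made just after Proposition~\ref{Prop:critical-a-HB2}: $\lambda_{\nu,a}$ is precisely the largest $\lambda>-1$ such that $J(\eta,\nu,a,\lambda)\ge0$ for all $\eta\in\mathcal G_{\nu,a}$. Indeed, for $\lambda\le\lambda_{\nu,a}$ the monotonicity of $\mu\mapsto J(\eta,\nu,a,\mu)$ together with $J(\eta,\nu,a,\lambda_\star(\eta,\nu,a))=0$ and $\lambda_\star(\eta,\nu,a)\ge\lambda_{\nu,a}\ge\lambda$ forces $J(\eta,\nu,a,\lambda)\ge0$; while for $\lambda>\lambda_{\nu,a}$, applying~\eqref{minim} gives an $\eta$ with $\lambda_\star(\eta,\nu,a)$ arbitrarily close to (or equal to) $\lambda_{\nu,a}<\lambda$, hence $J(\eta,\nu,a,\lambda)<0$ by strict monotonicity. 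Translating back through the identity $\mathcal Q_{\nu,a,\lambda}(\varphi)=J(\eta,\nu,a,\lambda)$ yields the first assertion of the corollary.

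For the equality case when $a<\crit(\nu)$: here $\lambda_\star(\eta_\star,\nu,a)=\lambda_{\nu,a}$ with $\eta_\star$ the radial minimizer from Proposition~\ref{Prop:critical-a-HB2}, and $\varphi_\star=|x|^{-a}\,\eta_\star$ is then a nontrivial element of $\mathcal F_{\nu,a}$ realizing $\mathcal Q_{\nu,a,\lambda_{\nu,a}}(\varphi_\star)=J(\eta_\star,\nu,a,\lambda_{\nu,a})=0$. Writing out $\eta_\star(x)=|x|^{\sqrt{\crit(a)^2-\nu^2}-\crit(a)}\,e^{-\frac\nu{\crit(a)}|x|}$ and multiplying by $|x|^{-a}=|x|^{-\crit(\nu)}$... — more carefully, by $|x|^{-a}$ with $\crit(a)=\tfrac12-a$, so that the exponent becomes $\sqrt{\crit(a)^2-\nu^2}-\crit(a)-a=\sqrt{\crit(a)^2-\nu^2}-\tfrac12$ — gives exactly~\eqref{varphistar}. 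For uniqueness I would trace the equality cases through the chain of inequalities in the proof of Proposition~\ref{Prop:critical-a-HB2}: equality in the Fourier-mode estimate~\eqref{JFourierDecomposition} forces all modes $\ell\ne0$ to vanish (the coefficient $\ell(\ell-2a)(1+\lambda)\rho+\ell(\ell+1-2a)\nu$ is strictly positive for $\ell\ne0$ when $a<1/2$), so $\varphi$ is radial; then equality in the completion-of-squares step of Lemma~\ref{Lem:Square} forces $\rho\,\phi'+\mu(\nu+\rho)\phi=0$, whose only solution up to a constant is $\phi_\star$. The main obstacle is the first step — verifying the transformation identity $\D^*(|x|^{-a}\eta)=-2i\,|x|^{-a}\dzbar\eta$ cleanly (noting $\dzbar|z|^{-a}=-\tfrac a2\,|z|^{-a}\,\bar z/|z|^2$ cancels the $-i\,a\,z/|z|^2$ term in $\D^*$, up to the sign conventions of the paper) and being careful that it holds on the relevant function spaces, not merely on $C^\infty_{\rm c}(\R^2\setminus\{0\})$; and, for the uniqueness claim, justifying that the equality case is inherited mode-by-mode, which requires the strict positivity that fails exactly at $a=\crit(\nu)$, i.e. $a=1/2$ paired with $\nu$ near $0$ — consistent with the hypothesis $a<\crit(\nu)$.
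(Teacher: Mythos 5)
Your proposal is correct and takes essentially the same route as the paper, which presents the corollary as a direct rephrasing of Proposition~\ref{Prop:critical-a-HB2} through the Aharonov--Casher substitution $\varphi=|x|^{-a}\,\eta$, under which $\D^*\varphi=-\,2\,i\,|x|^{-a}\,\dzbar\eta$ and hence $\mathcal Q_{\nu,a,\lambda}(\varphi)=J(\eta,\nu,a,\lambda)$; your mode-by-mode tracing of the equality case through~\eqref{JFourierDecomposition} and Lemma~\ref{Lem:Square} correctly supplies the uniqueness detail the paper leaves implicit. One small slip in your closing remark: since $\nu>0$ gives $\crit(\nu)<\tfrac12$, the strict positivity of $\ell\,(\ell-2\,a)$ for $\ell\neq0$ persists up to $a=\crit(\nu)$, so the restriction to $a<\crit(\nu)$ in the equality statement stems from the integrability/membership of $\varphi_\star$ in $\mathcal F_{\nu,a}$ at criticality rather than from a failure of that positivity.
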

However, we read from~\eqref{lambda-nu-a} that $\lim_{a\to\crit(\nu)_-}\lambda_{\nu,a}=0>-1$. As we shall see next, as soon as $a\in(\crit(\nu),1/2)$ for some $\nu\in(0,1/2]$, there is no $\lambda\in(-1,1)$ such that the Hardy-like inequality~\eqref{Hardy} holds true anymore.
\begin{prop}\label{Prop:supercritical} Let $\nu\in(0,1/2]$ and $a\in(\crit(\nu),1/2)$. Then
\[
\inf_{\eta\in\mathcal G_{\nu,a}}\lambda_\star(\eta,\nu,a)\le-1
\]
and for any $\lambda\in(-1,1)$, there is some $\varphi\in\varphi\in L^2(\R^2,\C)\cap H^1_{\rm loc}\big(\R^2\setminus\{0\},\C\big)$ such that $\sqrt{|x|/(1+|x|)}\,\D^*\varphi\in L^2(\R^2,\C)$ and $\mathcal Q_{\nu,a,\lambda}(\varphi)<0$.
\end{prop}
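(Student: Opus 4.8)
The plan is to exhibit, for each $\lambda\in(-1,1)$, an explicit trial function $\varphi$ for which $\mathcal Q_{\nu,a,\lambda}(\varphi)<0$, using the same algebra as in Lemma~\ref{Lem:Square} and the proof of Proposition~\ref{Prop:critical-a-HB2}, but now run in the \emph{supercritical} regime $\nu>\crit(a)$, where the quadratic $\nu\,\mu^2-(1-2a)\,\mu+\nu=0$ has no real root and the completion-of-squares identity can no longer produce a nonnegative remainder. Concretely, I would first reduce to radial functions: by the Fourier decomposition \eqref{Fourier}--\eqref{JFourierDecomposition} (with $\varphi=|x|^{-a}\eta$ via \eqref{varphi-eta}), the $\ell=0$ mode is the one that can drive the form negative, so it suffices to work with $\eta=\eta(\rho)$ and, after the rescaling $\rho\mapsto(1+\lambda)\rho$, with the one-dimensional functional on the left-hand side of \eqref{Jlambda}. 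The claim becomes: for $\nu>\crit(a)$ and $m:=(1-\lambda)/(1+\lambda)\in(0,\infty)$ arbitrary, there is a smooth $\phi$ (compactly supported away from $0$, or at least in the relevant weighted space) with
\[
\int_0^{+\infty}\frac{|\phi'|^2}{\nu+\rho}\,\rho^{2-2a}\,d\rho+m\int_0^{+\infty}|\phi|^2\,\rho^{1-2a}\,d\rho-\nu\int_0^{+\infty}|\phi|^2\,\rho^{-2a}\,d\rho<0\,.
\]

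**Key steps.** First I would analyze the behavior near $\rho=0$, which is where supercriticality bites. For $\rho$ small the three integrands behave like $\rho^{2-2a}|\phi'|^2$, $\rho^{1-2a}|\phi|^2$ (subleading), and $\nu\,\rho^{-2a}|\phi|^2$; testing with $\phi(\rho)=\rho^{-\gamma}$ near the origin and comparing $\int_0 \rho^{2-2a-2-2\gamma}\,d\rho$ against $\nu\int_0\rho^{-2a-2\gamma}\,d\rho$, one sees the competition is governed by $\gamma^2$ versus $\nu/\big(\text{something}\big)$ — more precisely the sharp constant in the Hardy inequality $\int_0^{+\infty}\rho^{2-2a}|\phi'|^2\,d\rho\ge\crit(a)^2\int_0^{+\infty}\rho^{-2a}|\phi|^2\,d\rho$ (this is the $\mu\to(-1)_+$ / $\nu\to\crit(a)$ limit already displayed in the excerpt, i.e. the $\ell=0$ case of \eqref{Wirtinger1} with $\beta=a-1/2$). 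Since $\nu>\crit(a)$ means $\nu>\crit(a)^2/\crit(a)$, but near $\rho=0$ the weight $1/(\nu+\rho)\sim 1/\nu$, so the first integral is asymptotically $\tfrac1\nu\int_0\rho^{2-2a}|\phi'|^2$, and the inequality one is fighting is $\tfrac1\nu\,\crit(a)^2<\nu$, i.e. exactly $\crit(a)<\nu$. So a trial function concentrated near the origin along a near-optimizer of the Hardy inequality — e.g. $\phi_\varepsilon(\rho)=\rho^{-\crit(a)}\,\chi(\rho)\,\rho^{\varepsilon}$ or $\phi_\varepsilon(\rho)=\rho^{\varepsilon-\crit(a)}$ cut off smoothly — will make the leading term $\big(\tfrac1\nu\crit(a)^2-\nu\big)\int_0\rho^{-2a}|\phi_\varepsilon|^2\,d\rho$, which is \emph{negative}, diverge as $\varepsilon\to0$, while the $m$-term stays bounded (or at least grows slower). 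I would then check that the chosen $\varphi=|x|^{-a}\eta$ indeed lies in $L^2\cap H^1_{\rm loc}(\R^2\setminus\{0\})$ with $\sqrt{|x|/(1+|x|)}\,\D^*\varphi\in L^2$: since $\varphi$ is radial, smooth away from $0$, compactly supported, and behaves like $\rho^{-\crit(a)-a+\varepsilon}=\rho^{\varepsilon-1/2}$ near $0$, it is in $L^2(\R^2)$ (as $2(\varepsilon-1/2)+1=2\varepsilon>-1$... wait, need $>-2$, fine) and the weighted $\D^*\varphi$ condition reduces to the finiteness of exactly the first integral above, which is finite for $\varepsilon>0$. Finally, the statement $\inf_{\eta\in\mathcal G_{\nu,a}}\lambda_\star(\eta,\nu,a)\le-1$ follows from the same computation in the $\mu\to+\infty$ (equivalently $\lambda\to-1$) reading: if for every $\lambda>-1$ there is $\eta$ with $J(\eta,\nu,a,\lambda)<0$, then since $\mu\mapsto J(\eta,\nu,a,\mu)$ is monotone one gets $\lambda_\star(\eta,\nu,a)<\lambda$ for such $\eta$, hence the infimum over $\eta$ is $\le\lambda$ for all $\lambda>-1$, i.e. $\le-1$.

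**The main obstacle** is making the concentration argument clean: one must ensure the negative term $\big(\crit(a)^2/\nu-\nu\big)\int_0\rho^{-2a}|\phi_\varepsilon|^2\,d\rho$ genuinely dominates \emph{both} the correction coming from replacing $1/(\nu+\rho)$ by $1/\nu$ (which contributes a term of order $\int_0\rho^{3-2a}|\phi_\varepsilon'|^2\,d\rho$, one power of $\rho$ better, hence lower order) \emph{and} the subcritical cross term from the integration by parts that produced the $\crit(a)^2$ constant in the first place (one should write $\phi_\varepsilon=\rho^{-\crit(a)}\,w_\varepsilon$ and track the genuine remainder $\int_0\rho^{2-2a}\cdot\rho^{-2\crit(a)}|w_\varepsilon'|^2\,d\rho=\int_0\rho^{1-2\crit(a)-2a}|w_\varepsilon'|^2\cdot\rho\,d\rho$, choosing $w_\varepsilon$ so this stays $o$ of the divergent negative term), \emph{and} the positive $m$-term $m\int_0\rho^{1-2a}|\phi_\varepsilon|^2\,d\rho$, which carries an extra power $\rho^1$ and is therefore also lower order near $0$. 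Since all the competing positive contributions are one power of $\rho$ better-behaved at the origin than the leading negative one, a one-parameter family of cutoff functions shrinking toward $\rho=0$ will do, and the estimate is essentially a log-divergence comparison; the only care needed is the uniform-in-$\lambda$ (hence uniform-in-$m$) statement, which is fine because the $m$-dependence only enters a subleading term.
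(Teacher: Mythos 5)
Your proposal is correct and follows essentially the same route as the paper: the paper's proof tests $J$ at $\mu=-1$ with the radial trial function $\eta_\epsilon(x)=|x|^{a-\frac12}e^{-|x|}$ cut off at scale $\epsilon$ near the origin, obtaining $J(\eta_\epsilon,\nu,a,\mu)\le J(\eta_\epsilon,\nu,a,-1)\le C_1\,|\log\epsilon|\big(\crit(a)^2/\nu-\nu\big)+C_2<0$, which is exactly your near-optimizer $\rho^{-\crit(a)}$ of the weighted Hardy inequality concentrated where $1/(\nu+\rho)\approx1/\nu$, with the same sign condition $\crit(a)^2/\nu-\nu<0\iff\nu>\crit(a)$. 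The only cosmetic difference is that the paper works directly with $J$ and the monotonicity in $\mu$ rather than passing through the $\lambda$-dependent rescaling to~\eqref{Jlambda}, but the mechanism and all the bookkeeping of lower-order terms are the same as in your plan.
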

\begin{proof} Let $\epsilon>0$, $\mu\in[-1,1)$ and, for any $\rho>0$ and consider a function $\eta_\epsilon\in\mathcal G_{\nu,a}$ such that
\[
\eta_\epsilon(x)=|x|^{a-\frac12}\,e^{-|x|}\quad\mbox{if}\quad|x|\ge\epsilon\quad\mbox{and}\quad|\nabla\eta_\epsilon(x)|\le\epsilon^{a-\frac32}\quad\mbox{if}\quad|x|\le\epsilon\,.
\]
A computation shows the existence of two positive constants $C_1$ and $C_2$ such that
\[
J(\eta_\epsilon,\nu,a,\mu)\le J(\eta_\epsilon,\nu,a,-1)\le C_1\,|\log\epsilon|\(\frac{\crit(a)^2}\nu-\nu\)+C_2<0
\]
for $\epsilon$ small enough, so that $\lambda_\star(\eta_\epsilon,\nu,a)\le-1$. Using the transformation~\eqref{varphi-eta}, this also proves that $\mathcal Q_{\nu,a,\lambda}$ achieves a negative value.
\end{proof}

\begin{proof}[Proof of Theorem~\ref{Thm:Hardy}.] We know from Corollary~\ref{Cor:Thm1a} that $\acrit(\nu)\ge\crit(\nu)$ and from Proposition~\ref{Prop:supercritical} that $\acrit(\nu)\le\crit(\nu)$. Since $\lambda\mapsto\mathcal Q_{\nu,a,\lambda}(\varphi)$ as defined in~\eqref{quadratic} is monotone nonincreasing,~\eqref{HardyOpt} holds true for any $\lambda\in[-1,\lambda_{\nu,a})]$.
\end{proof}

\section{The 2d magnetic Dirac-Coulomb operator with an Aharonov-Bohm magnetic field}\label{Sec:mainthms}

This section is devoted to the self-adjointness of the operator $H_{\nu,a}$ with domain $\mathcal D_{\nu,a}$, when $\nu\in(0,1/2]$ and $a\in(0,\crit(\nu)]$, that is, Theorem~\ref{Thm:critical-a-HB}. In the same range of parameters, we also identify $\lambda_{\nu,a}$ as the \emph{ground-state} of $H_{\nu,a}$ (Theorem~\ref{Thm:ground-state}).

\begin{proof}[Proof of Theorem~\ref{Thm:critical-a-HB}.] We first deal with abstract results when $a\le\acrit(\nu)$ before characterizing $\mathcal F_{\nu,a}$ in the subcritical range $a<\acrit(\nu)$.

\smallskip\noindent$\rhd$ \emph{Critical and subcritical cases, $a\le\acrit(\nu)$: domain and self-adjointness}.
We follow the method of~\cite{Esteban-Loss-2} for dealing with non-magnetic $3d$ Dirac-Coulomb operators in~\cite{Esteban-Loss-1,SchSolTok-20}. This method applies almost without change and we provide only a sketch of the proof. 

By Theorem~\ref{Thm:Hardy}, the quadratic form $\mathcal Q_{\nu,a,\lambda}$ defined by~\eqref{quadratic} is nonnegative on $C^\infty_{\rm c}\big(\R^2\setminus\{0\},\C\big)$ for any $\lambda\in(-1,\lambda_{\nu,a}]$. Let us define the norms $\|\cdot\|$ and $\|\cdot\|_\lambda$ by
\begin{multline*}
\|\varphi\|^2:=\|\varphi\|_{L^2(\R^2,\C)}^2+\left\|\sqrt{\tfrac{|x|}{1+|x|}}\,\D^*\varphi\right\|_{L^2(\R^2,\C)}^2\\
\mbox{and}\quad\|\varphi\|^2_\lambda:=\|\varphi\|_{L^2(\R^2,\C)}^2+\left\|\tfrac{\D^*\varphi}{\sqrt{1+\lambda+\frac\nu{|x|}}}\right\|_{L^2(\R^2,\C)}^2\,.
\end{multline*}
For the same reasons as in~\cite[Lemma~2.1]{DolEstSer-00} or~\cite[Lemma~5]{SchSolTok-20}, the norms $\|\cdot\|$ and $\|\cdot\|_\lambda$ are equivalent for any $\lambda\in(-1,1)$ and the operator $\varphi\mapsto\D^*\varphi/\big(1+\lambda+\nu\,|x|^{-1}\big)$ on $C^\infty_{\rm c}\big(\R^2\setminus\{0\},\C\big)$ is closable with respect to $\|\cdot\|_\lambda$, with a domain that does not depend on $\lambda$. By arguing as in~\cite[Lemma~2.1]{DolEstSer-00} and~\cite[Lemma~7]{SchSolTok-20}, there is a constant $\kappa_\lambda\ge1$ such that $\mathcal Q_{\nu,a,\lambda}(\varphi)+\kappa_\lambda\,\|\varphi\|_\lambda^2$ is equivalent to $\mathcal Q_{\nu,a,0}(\varphi)+\|\varphi\|_0^2$ for all $\lambda\in(-1,1)$. These quadratic forms are nonnegative and closable with form domain $\mathcal F_{\nu,a}\subset L^2(\R^2,\C)$ as defined in Theorem~\ref{Thm:critical-a-HB}. Moreover $\mathcal F_{\nu,a}$ does not depend on $\lambda$ because of the equivalence of the quadratic forms. On $\mathcal D_{\nu,a}$, the operator $H_{\nu,a}$ such that
\[
H_{\nu,a}\,\psi=\pmatrix{\D\chi+\big(1-\frac\nu{|x|}\big)\,\varphi\\[4pt]\D^*\varphi-\big(1+\frac\nu{|x|}\big)\,\chi}\quad\forall\,\psi=\pmatrix{\varphi\\\chi}\in\mathcal D_{\nu,a}
\]
is self-adjoint, for the same reasons as in~\cite{Esteban-Loss-1}, because for all $\lambda\in (-1,\lambda_{\nu,a})$, the operator $H_{\nu,a}-\lambda$ is symmetric and it is a bijection from $\mathcal D_{\nu,a}$ onto $L^2(\R^2,\C^2)$.

\smallskip\noindent$\rhd$ \emph{In the subcritical range $a<\acrit(\nu)$, the space $\mathcal F_{\nu,a}$} endowed with the norm $\|\cdot\|$ is given by~\eqref{subcriticalF+}. Let us prove it. With $C=\min\{1+\lambda,\nu\}$, we have that
\[
\mathcal Q_{\nu,a,\lambda}(\varphi)\le\tfrac1C\int_{\R^2}\tfrac{|x|}{1+|x|}\,|\D^*\varphi|^2\,dx+2\int_{\R^2}|\varphi|^2\,dx\le\max\left\{C^{-1},2\right\}\,\|\varphi\|^2
\]
for any $\varphi\in C^\infty_{\rm c}\big(\R^2\setminus\{0\},\C\big)$. The reverse inequality goes as follows. For any $\lambda\in(-1,\lambda_{\nu,a})$, let
\[
t=\frac{\crit(a)}\nu\,\sqrt{1-\lambda^2}
\]
and notice that $t>1$. This choice of $t$ is made such that $\lambda=\lambda_{t\nu,a}$. Let us choose some $\varphi\in C^\infty_{\rm c}\big(\R^2\setminus\{0\},\C\big)$ and consider $\phi(x)=\varphi\big(t^{-1}x\big)$. A simple change of variables shows that
\begin{multline*}
\int_{\R^2}\(\frac 1{t^2}\,\frac{|\D^*\varphi|^2}{1+\lambda+\frac\nu{|x|}}-\frac\nu{|x|}\,|\varphi|^2\)dx=\frac 1{t^2}\int_{\R^2}\(\frac{|\D^*\phi|^2}{1+\lambda+\frac{t\kern1pt\nu}{|x|}}-\frac{t\nu}{|x|}\,|\phi|^2\)dx\\
\ge\frac{\lambda-1}{t^2}\int_{\R^2}|\phi|^2\,dx=(\lambda-1)\int_{\R^2}|\varphi|^2\,dx
\end{multline*}
where the inequality is obtained by writing $\mathcal Q_{t\nu,a,\lambda}(\phi)\ge0$. As a consequence, we have
\[
\mathcal Q_{\nu,a,\lambda}(\varphi)\ge\(1-\tfrac1{t^2}\)\frac1{\max\{1+\lambda,\nu\}}\int_{\R^2}\tfrac{|x|}{1+|x|}\,|\D^*\varphi|^2\,dx\,,
\]
which concludes the proof.
\end{proof}

\begin{proof}[Proof of Theorem~\ref{Thm:ground-state}.] As noted in the introduction, if $\lambda\in(-1,1)$ is an eigenvalue of $H_{\nu,a}$, its upper component is, after the the Aharonov-Casher transformation~\eqref{varphi-eta}, a critical point of $\eta\mapsto J(\eta,\nu,a,\lambda)$, so that the ground state energy, \emph{i.e.}, the lowest eigenvalue of $H_{\nu,a}$ in $(-1,1)$, is larger or equal than $\lambda_{\nu,a}$. We have equality if $\eta_1=\eta_\star$ determines an eigenfunction of $H_{\nu,a}$ through~\eqref{psi-eta} and~\eqref{eta2}. This is straightforward in the subcritical range as $\eta_\star\in\mathcal G_{\nu,a}$ if $a<\acrit(\nu)$ by Theorem~\ref{Thm:critical-a-HB}. 

The critical case $a=\acrit(\nu)$ is more subtle as we have no explicit characterization of $\mathcal F_{\nu,a}$ or, equivalently,~$\mathcal G_{\nu,a}$. We have indeed to prove that $\eta_\star\in\mathcal G_{\nu,a}$ if $a=\acrit(\nu)$. For all $\epsilon\in(0,1]$, let us define the truncation function
\[
\theta_\epsilon := \begin{cases} 0&\text{if \;}0\le \rho\le \epsilon/2\,,\\ \frac12\(1+\sin\(\frac{2\pi}\epsilon\,\rho-\frac{3\pi}2\)\) & \text{if \;} \epsilon/2<\rho\le \epsilon\,,\\ 1 & \text{if \;} \epsilon\le \rho\le 1/\epsilon\,,\\ \frac12\(1+\sin\(\rho+\frac\pi{2}-\frac1\epsilon\)\) & \text{if \;} 1/\epsilon\le\rho\le 1/\epsilon+\pi\,,\\ 0 & \text{if \; } \rho\ge 1/\epsilon+\pi\,.\end{cases}
\]
With $\varphi_\star$ defined by~\eqref{varphistar}, the functions $\varphi_\epsilon:=\varphi_\star\,\theta_\epsilon$ are equal to $0$ in a neighborhood of the origin and converge almost everywhere to $\varphi_\star$ as $\epsilon\to0_+$, with
\[
\limsup_{\epsilon\to0_+}\mathcal Q_{\nu,a,0}(\varphi_\epsilon)<+\infty\,.
\]
But these functions are not in $C^\infty\big(\R^2\setminus\{0\},\C\big)$. To end the proof we regularize them using a convolution product.
\end{proof}

An eigenfunction associated with $\lambda_{\nu,a}$ is obtained as a sub-product of our method.
\begin{prop}\label{Prop:qualitative} Let $\nu\in(0,1/2]$. For any $a\in(0,\crit(\nu)]$, the eigenspace of $H_{\nu,a}$ associated with $\lambda_{\nu,a}$ is generated by the spinor
\[
\psi_{\nu,a}(\rho,\theta)=\pmatrix{1\\[4pt]i\,e^{i\theta}\,\sqrt{\tfrac{1-\lambda_{\nu,a}}{1+\lambda_{\nu,a}}}}\,\rho^{\kern-2pt\sqrt{\crit(a)^2-\nu^2}-\frac12}\,e^{-\frac\nu{\crit(a)}\,\rho}\quad\forall\,(\rho,\theta)\in\R^+\times[0,2\pi)\,.
\]
\end{prop}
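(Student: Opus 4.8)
The plan is to prove the statement in two steps: first that the explicit spinor $\psi_{\nu,a}$ is an eigenfunction of $H_{\nu,a}$ lying in $\mathcal D_{\nu,a}$, with eigenvalue $\lambda_{\nu,a}$; and second that every eigenfunction associated with $\lambda_{\nu,a}$ is a scalar multiple of $\psi_{\nu,a}$. The second step is where the variational rigidity contained in Corollary~\ref{Cor:Thm1a} and Proposition~\ref{Prop:critical-a-HB2} enters.

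For the first step, I would substitute $\psi_{\nu,a}$ into the system~\eqref{EL} with $\lambda=\lambda_{\nu,a}$ and check both equations directly. The upper component of $\psi_{\nu,a}$ is radial and equals $\varphi_\star$ from~\eqref{varphistar}, so applying $\D^*=-\,i\,e^{i\theta}\(\partial_\rho+\tfrac i\rho\,\partial_\theta+\tfrac a\rho\)$ only involves $\partial_\rho$; writing $s=\sqrt{\crit(a)^2-\nu^2}$ and using $\tfrac12-a=\crit(a)$, $\crit(a)^2-s^2=\nu^2$ and $\sqrt{(1-\lambda_{\nu,a})/(1+\lambda_{\nu,a})}=\nu/(\crit(a)+s)$, a short computation gives $\D^*\varphi_\star=\big(1+\lambda_{\nu,a}+\tfrac\nu{|x|}\big)\,\chi_\star$ with $\chi_\star$ exactly the lower component displayed in the statement, and likewise $\D\chi_\star+\big(1-\lambda_{\nu,a}-\tfrac\nu{|x|}\big)\varphi_\star=0$. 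It then remains to place $\psi_{\nu,a}$ in the domain $\mathcal D_{\nu,a}$: the bound $\int_0^{+\infty}\rho^{2s}\,e^{-2\nu\rho/\crit(a)}\,d\rho<\infty$ gives $\psi_{\nu,a}\in L^2(\R^2,\C^2)$; its upper component belongs to $\mathcal F_{\nu,a}$ by the explicit description~\eqref{subcriticalF+} when $a<\acrit(\nu)$ and by the truncation--mollification argument already carried out in the proof of Theorem~\ref{Thm:ground-state} when $a=\acrit(\nu)$; and $H_{\nu,a}\psi_{\nu,a}=\lambda_{\nu,a}\psi_{\nu,a}\in L^2(\R^2,\C^2)$. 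Hence $\psi_{\nu,a}\in\mathcal D_{\nu,a}$ and $\lambda_{\nu,a}$ is an eigenvalue of $H_{\nu,a}$.

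For the second step, let $\psi=(\varphi,\chi)^\top\in\mathcal D_{\nu,a}$ satisfy $H_{\nu,a}\psi=\lambda_{\nu,a}\psi$. As explained before~\eqref{EL}, eliminating $\chi$ through the second equation shows that $\varphi$ is a critical point of the nonnegative quadratic form $\mathcal Q_{\nu,a,\lambda_{\nu,a}}$, hence a minimizer, so that $\mathcal Q_{\nu,a,\lambda_{\nu,a}}(\varphi)=0$. When $a<\acrit(\nu)=\crit(\nu)$, Corollary~\ref{Cor:Thm1a} forces $\varphi=c\,\varphi_\star$ for some constant $c$, and the second equation of~\eqref{EL} then gives $\chi=c\,\chi_\star$, so $\psi=c\,\psi_{\nu,a}$. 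When $a=\acrit(\nu)$, I would work with $\eta=|x|^a\varphi\in\mathcal G_{\nu,a}$ and the Fourier decomposition~\eqref{JFourierDecomposition}: the estimate for the modes $\ell\neq0$ in the proof of Proposition~\ref{Prop:critical-a-HB2} is in fact strict, since $\ell\,(\ell-2a)>0$ and $\ell\,(\ell+1-2a)\ge0$ for $\ell\in\Z\setminus\{0\}$ (recall $a\le\crit(\nu)<1/2$), so the difference term there is strictly positive unless the corresponding mode vanishes; therefore $J(\eta,\nu,a,\lambda_{\nu,a})=0$ forces all modes with $\ell\neq0$ to vanish, and the equality case of Lemma~\ref{Lem:Square} then gives $\eta=c\,\eta_\star$, that is, again $\psi=c\,\psi_{\nu,a}$.

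The main obstacle is the critical case $a=\acrit(\nu)$, where $\mathcal F_{\nu,a}$ (equivalently $\mathcal G_{\nu,a}$) is not described explicitly: one must justify that $\varphi_\star$ and $\eta_\star$ belong to these form domains --- this is exactly what the approximation argument in the proof of Theorem~\ref{Thm:ground-state} delivers --- and that the Fourier-mode analysis together with the one-dimensional rigidity of Lemma~\ref{Lem:Square} survive the passage from $C^\infty_{\rm c}(\R^2\setminus\{0\},\C)$ to the closure in the form norm, which rests on the orthogonality of the Fourier modes. By contrast, the substitution in the first step and the $L^2$ integrability are routine.
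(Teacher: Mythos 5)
Your proof is correct and follows essentially the same route as the paper, whose entire argument is the one-line observation that the upper component of any eigenfunction must be $\varphi_\star$ by Corollary~\ref{Cor:Thm1a} and that the lower component is then fixed by~\eqref{EL}. The one substantive addition is your Fourier-mode rigidity argument for uniqueness in the critical case $a=\acrit(\nu)$, which is genuinely needed (and glossed over by the paper) since the equality-case statement of Corollary~\ref{Cor:Thm1a} is only asserted for $a<\crit(\nu)$.
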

\begin{proof} The result follows from $\varphi=\varphi_\star$ as in Corollary~\ref{Cor:Thm1a} for the upper component and from~\eqref{EL} for the lower component.\end{proof}

Let us conclude this section by some comments. In Proposition~\ref{Prop:critical-a-HB2} and in Corollary~\ref{Cor:Thm1a}, we have $J(\eta_\star, \nu, a, \lambda_{\nu,a})=0$ and $\mathcal Q_{\nu,a,\lambda_{\nu,a}}(\varphi_\star)=0$ if $a=\crit(\nu)$, but we have to pay attention to the fact that the various terms in the integrals are not all individually integrable. In the proof of Theorem~\ref{Thm:ground-state}, this is reflected by the fact that we have to use a truncation argument. The characterization of the space $\mathcal F_{\nu,a}$ in the critical case $a=\acrit(\nu)$ is more technical than in the subcritical regime and we will not do it here. For similar computations without magnetic field in dimensions $2$ and $3$, see~\cite[Section~1.5 and Appendix~A.3]{Esteban-Lewin-Sere-2019}.

\section{Non-relativistic limit}\label{Sec:NRL}

In this section, we discuss the non-relativistic limit of the ground state spinor and of the ground state energy of the magnetic Dirac-Coulomb operator on $\R^2$ with the Aharonov-Bohm magnetic field $\mathbf A_a$. Let us introduce the \emph{speed of light} $c$ in the operator and consider
\[
H^c_{\nu,a}:=c^2\,\sigma_3-i\,c\,\boldsymbol{\sigma}\cdot\nA-\frac\nu{|x|}\,\mathrm{Id}\,.
\]
Up to this point, we considered atomic units and took $c=1$, $H_{\nu,a}=H^1_{\nu,a}$. Here we consider the limit as $c\to+\infty$.

If $\psi_c$ is an eigenfunction of $H^c_{\nu,a}$ with eigenvalue $\lambda_c$, that is, $H^c_{\nu,a}\,\psi_c=\lambda_c\,\psi_c$, then $\Psi_c(x):=\psi_c(x/c)$ solves $H_{\nu/c,a}\,\Psi_c=\frac{\lambda_c}{c^2}\,\Psi_c$. As a consequence of Proposition~\ref{Prop:qualitative},
\[
\psi_c=\pmatrix{\varphi_c\\\chi_c}\quad\mbox{where}\quad\left\{
\begin{array}{c}
\varphi_c(\rho,\theta)=\rho^{\kern-2pt\sqrt{\crit(a)^2-\nu^2/c^2}-\frac12}\,e^{-\frac\nu{\crit(a)}\,\rho}\\[8pt]
\chi_c(\rho,\theta)=i\,e^{i\theta}\,\sqrt{\tfrac{c^2-\lambda_c}{c^2+\lambda_c}}\,\varphi_c(\rho,\theta)
\end{array}\right.
\;\forall\,(\rho,\theta)\in\R^+\times[0,2\pi)
\]
with $\lambda_c=c^2\,\sqrt{1-\frac{\nu^2}{\crit(a)^2\,c^2}}$
so that, by passing to the limit as $c\to+\infty$, we obtain
\[
\lim_{c\to +\infty}\(\lambda_c-c^2\)=-\,\tfrac{\nu^2}{2\,\crit(a)^2}\,,
\]
and
\[
\varphi_c\to\rho^{-a}\,e^{-\frac\nu{\crit(a)}\,\rho}\,,\quad\chi_c\to0
\]
in $H^1_{\rm loc}\big(\R^2\setminus\{0\},\C\big)$.

We recall that $\crit(a)=\frac12-a$. The eigenvalue problem written as a system is
\[
c\,\D\,\chi_c+\(c^2-\tfrac\nu\rho\,\)\varphi_c=\lambda_c\,\varphi_c\,,\quad c\,\D^*\,\varphi_c-\(c^2+\tfrac\nu\rho\,\)\chi_c=\lambda_c\,\chi_c\,.
\]
After eliminating the lower component using
\[
\chi_c=\frac{c\,\D^*\,\varphi_c}{\lambda_c+c^2+\frac\nu\rho}\,,
\]
we obtain for the upper component the equation
\[
\D\(\frac{c^2\,\D^*\,\varphi_c}{\lambda_c+c^2+\frac\nu\rho}\)-\tfrac\nu\rho\,\varphi_c=\(\lambda_c-c^2\)\varphi_c\,.
\]
and notice that this is consistent with the fact that the limiting solution solves the equation
\[
\D\,\D^*\,\varphi-\tfrac\nu\rho\,\varphi=-\,\tfrac{\nu^2}{2\,\crit(a)^2}\,\varphi
\]
in the sense of distributions. On $C^0(\R^2,\C)\cap C_c^2\big(\R^2\setminus\{0\},\C\big)$, an elementary computation shows that $\D\,\D^* = -\nA^2 - \mathbf B$, where the magnetic field is $\mathbf B=2\pi\,a\,\delta_0$, corresponding to a Dirac mass at the origin. This operator is similar to the Pauli operator for measure valued magnetic fields studied by Erdoes and Vougalter in~\cite{Erdoes-Vougalter} using the Aharonov-Casher transformation~\eqref{varphi-eta}. If we define the quadratic form
\[
q(\varphi, \varphi'):=\int_{\R^2}\dzbar(|x|^a\varphi)\,\overline{\dzbar(|x|^a\varphi')}\,|x|^{-2a}dx\,,
\]
we can follow~\cite[Theorem~2.5]{Erdoes-Vougalter} and define $\D\,\D^*$ as the Friedrichs extension on $L^2(\R^2,\C)$ of the unique self-adjoint operator associated with $q$, with domain
\[
\Big\{ \varphi\in L^2(\R^2, \C)\;:\; q(\varphi, \varphi)<+\infty\;,\; q(\varphi, \cdot)\in L^2(\R^2, \C)'\Big\}\,.
\]

\appendix\section{Appendix}\label{Appendix}

\subsection{The ground state and Laguerre polynomials}\label{Sec:Laguerre}

Based on~\eqref{Fourier} and~\eqref{JFourierDecomposition}, we can provide an alternative computation of the optimal function $\eta_\star$ in Proposition~\ref{Prop:critical-a-HB2}.

As a consequence of the properties of the Laguerre polynomials (see~\cite{Mawhin_2010}), for any $\ell\in\Z$, solutions of
\be{EDO}
-\(\frac{\rho\,\phi'-\ell\,\phi}{(1+\lambda)\,\rho+\nu}\)'-(\ell+1-2\,a)\,\frac{\phi'-\frac\ell\rho\,\phi}{(1+\lambda)\,\rho+\nu}+\(1-\lambda-\frac\nu\rho\)\,\phi=0
\ee
are generated by the functions $\phi(\rho)=\rho^A\,e^{-\,B\,\rho}$ with either
\[
A=a-\frac12-\frac{\crit_\ell(a)}{|\crit_\ell(a)|}\,\sqrt{\crit_\ell(a)^2-\nu^2}\,,\quad B=\frac\nu{\crit_\ell(a)}\,,\quad\lambda=-\,\frac{\sqrt{\crit_\ell(a)^2-\nu^2}}{|\crit_\ell(a)|}\,,
\]
or
\[
A=a-\frac12+\frac{\crit_\ell(a)}{|\crit_\ell(a)|}\,\sqrt{\crit_\ell(a)^2-\nu^2}\,,\quad B=\frac\nu{\crit_\ell(a)}\,,\quad\lambda=\frac{\sqrt{\crit_\ell(a)^2-\nu^2}}{|\crit_\ell(a)|}\,,
\]
where $\crit_\ell(a):=1/2+\ell-a$. However, the integrability of $\rho\mapsto\rho^{2-2a}\,|\phi'(\rho)|^2$ in a positive neighbourhood of $\rho=0$ selects the second one, with $\ell\ge0$. For any $\ell\in\N$, let $\phi_\ell(\rho):=\kappa_\ell\,\rho^{A_\ell(a)}\,e^{-\,B_\ell(a)\,\rho}$ with
\[
A_\ell(a)=a-\frac12+\frac{\crit_\ell(a)}{|\crit_\ell(a)|}\,\sqrt{\crit_\ell(a)^2-\nu^2}\,,\quad B_\ell(a)=\frac\nu{\crit_\ell(a)}\,,\quad\lambda_\ell(a)=\frac{\sqrt{\crit_\ell(a)^2-\nu^2}}{|\crit_\ell(a)|}\,.
\]
With this choice, we find that $J(\eta_\star,\nu,a,\lambda_0(a))\ge0$ with equality if and only if $\kappa_\ell=0$ for any $\ell\ge1$. On the other hand, $\eta_\star$ is a critical point of $J$, so that $\phi_\ell$ solves~\eqref{EDO} with $\lambda=\lambda_\star(\eta_\star,\nu,a)$. Altogether we conclude that $\lambda_\star(\eta_\star,\nu,a)=\lambda_0(a)$.

\subsection{Special cases of Hardy-type inequalities}\label{Sec:SpecialHardy}

Here we list some special cases of Hardy-type inequalities related with the Dirac-Coulomb operator, with or without magnetic fields, which are of interest by themselves. This list of inequalities complements the inequalities of Section~\ref{Sec:Introduction} (after the statement of Proposition~\ref{Prop:critical-a-HB2}).

For any $\eta\in C^\infty_{\rm c}\big(\R^2\setminus\{0\},\C\big)$, Inequality~\eqref{reducedDirac-Hardy} becomes
\[\label{inhom-cartesian-general-CD-acrit}
\int_{\R^2}\(\frac{2\,|x|}{2\,|x|+1}\,\big|(\partial_1+i\,\partial_2)\,\eta\big|^2+|\eta|^2\)dx\ge\frac12\int_{\R^2}\frac{|\eta|^2}{|x|}\,dx
\]
for $\nu=1/2$ and $a=0$, and
\[\label{inhom-cartesian-CD-acrit}
\int_{\R^2}\(\frac{|x|}{|x|+\nu}\,\big|(\partial_1+i\,\partial_2)\,\eta\big|^2+|\eta|^2\)|x|^{2\nu-1}\,dx\ge\nu\int_{\R^2}|\eta|^2\,|x|^{2\nu-2}\,dx
\]
for $\nu\in(0, 1/2)$, $a=\crit(\nu)>0$ while, in that case, a scaling also shows that
\[\label{Magn-Hardy-Pauli-acrit-hom}
\frac1\nu\int_{\R^2}|x|^{2\nu}\,\big|(\partial_1+i\,\partial_2)\eta\big|^2\,|x|^{2\nu}\,dx\ge\nu\int_{\R^2}|\eta|^2\,|x|^{2\nu-2}\,dx\quad\forall\,\eta\in C^\infty_{\rm c}\big(\R^2\setminus\{0\},\C\big)\,.
\]
In the case $a=\crit(\nu)$ and $\nu\in(0, 1/2)$,~\eqref{HardyOpt} becomes
\[
\int_{\R^2}\(\frac{|x|}{|x|+\nu}\,|D_{\crit(\nu)}^*\varphi|^2+|\varphi|^2\)\,dx\ge\nu\int_{\R^2}\frac{|\varphi|^2}{|x|^2}\,dx\quad\forall\,\varphi\in C^\infty_{\rm c}\big(\R^2\setminus\{0\},\C\big)\,.
\]
and it is interesting to relate this last inequality with the homogeneous case of~\eqref{Magn-Hardy-Pauli-intro}, which can be written as
\[\label{Magn-Hardy-Pauli-acrit}
\int_{\R^2}\frac{|x|}\nu\,\big|\big(\sigma\cdot\nabla_{\crit(\nu)}\big)\psi\big|^2\, dx \ge \nu \int_{\R^2} \frac{ |\psi|^2}{|x|}\, dx\quad\forall\,\psi\in C^\infty_{\rm c}(\R^2\setminus\{0\},\C^2)\,.
\]

\subsection{The positron case}\label{Sec:Positron}

In the positron case with positively charged singularity, the eigenvalue problem
\[
H_{a,-\nu}\,\psi=\lambda\,\psi
\]
is transformed using~\eqref{psi-eta} into the system
\begin{align*}
&\eta_1-2\,i\,\rho^{2a}\,\dz\eta_2+\tfrac\nu\rho\,\eta_1=\lambda\,\eta_1\,,\\
&-2\,i\,\rho^{-2a}\,\dzbar\eta_1+\tfrac\nu\rho\,\eta_2-\eta_2=\lambda\,\eta_2\,.
\end{align*}
Using the first equation, we can eliminate the upper component
\[
\eta_1=-\,2\,i\,\frac{\rho^{2a}\,\dz\eta_2}{\lambda-1-\tfrac\nu\rho}
\]
and obtain the equation
\[
-\,4\,\rho^{-2a}\,\dzbar\(\frac{\rho^{2a}\,\dz\eta_2}{\lambda-1-\tfrac\nu\rho}\)-\(1+\lambda-\tfrac\nu\rho\)\eta_2=0\,,
\]
which is a critical point of
\[
J^{(+)}(\eta,\nu,a,\lambda):=\int_{\R^2}\(\frac{4\,|\dz\eta|^2}{1-\lambda+\tfrac\nu{|x|}}+\Big(1+\lambda-\tfrac\nu{|x|}\Big)\,|\eta|^2\)\,|x|^{2a}\,dx\,.
\]
As a function of $\lambda$, $J^{(+)}(\eta,\nu,a,\lambda)$ is monotone increasing and the same method as in the proof of Proposition~\ref{Prop:critical-a-HB2} applies up to the change $\lambda\mapsto-\lambda$. This is why in the computation of the roots of
\[
\nu\,\mu^2-(1-2\,a)\,\mu+\nu=0\,,
\]
we choose $\mu_-=-\(\crit(a)+\sqrt{\crit(a)^2-\nu^2}\)/\nu$ and obtain by solving $(1+\lambda)/(1-\lambda)=\mu_-^2$ that the optimal value for $\lambda$ is
\[
\lambda=-\,\frac{\sqrt{\crit(a)^2-\nu^2}}{\crit(a)}=-\lambda_{\nu,a}\,.
\]
The result of Theorem~\ref{Thm:Hardy} becomes
\[
\int_{\R^2}\(\frac{|\D\chi|^2}{1+\lambda_{\nu,a}+\frac\nu{|x|}}+\Big(1-\lambda_{\nu,a}-\tfrac\nu{|x|}\Big)\,|\chi|^2\)\,dx\ge0\quad\forall\,\chi\in C^\infty_{\rm c}\big(\R^2\setminus\{0\},\C\big)
\]
with equality if $\chi$ is the lower component of $\psi_{-\nu,a}$ in Proposition~\ref{Prop:qualitative}.
This also means that~\eqref{reducedDirac-Hardy} is replaced by
\[
\int_{\R^2}\(\frac{|\dz\eta|^2}{1+\lambda_{\nu,a}+\frac\nu{|x|}}+\Big(1-\lambda_{\nu,a}-\tfrac\nu{|x|}\Big)\,|\eta|^2\)|x|^{2a}\,dx\ge0\quad\forall\,\eta\in C^\infty_{\rm c}\big(\R^2\setminus\{0\},\C\big)
\]
with same consequences: under the assumption $a\in(0,1/2]$ and $\nu\in(0,\crit(a)]$, we obtain
\[
\int_{\R^2}\(\frac4\nu\,|x|\,|\dz\eta|^2+2\,|\eta|^2\)\,|x|^{2a}\,dx\ge\nu\int_{\R^2}|\eta|^2\,|x|^{2a-1}\,dx\quad\forall\,\eta\in C^\infty_{\rm c}\big(\R^2\setminus\{0\},\C\big)
\]
and
\[
\int_{\R^2}|\dz\eta|^2\,|x|^{2a+1}\,dx\ge\frac14\,\crit(a)^2\int_{\R^2}|\eta|^2\,|x|^{2a-1}\,dx\quad\forall\,\eta\in C^\infty_{\rm c}\big(\R^2\setminus\{0\},\C\big)\,.
\]

\subsection*{Acknowledgments}

\noindent{\small This research has been partially supported by the projects \emph{EFI}, contract~ANR-17-CE40-0030 (J.D.) and \emph{molQED} (M.J.E.) of the French National Research Agency (ANR) and by the NSF grant DMS-1856645 (M.L.). \\[4pt]
\small\copyright\,2020 by the authors. This paper may be reproduced, in its entirety, for non-commercial purposes.}

\small
\bibliographystyle{siam}\bibliography{Ari70-proc-DEL}
\end{document}